\newcommand{\lrl}{%
\mathrel{\vcenter{\offinterlineskip \hbox{$\varleftarrow$}\vskip-.1ex\hbox{$\varrightarrow$}\vskip-.1ex\hbox{$\varleftarrow$}}}}
\newtheorem{theorem}{Theorem}[section] 
\newtheorem{thm}[theorem]{Theorem}
\newtheorem*{utheorem}{Theorem}
\newtheorem{cor}[theorem]{Corollary}
\newtheorem{prop}[theorem]{Proposition}
\newtheorem{lem}[theorem]{Lemma}
\theoremstyle{definition}
\newtheorem{definition}[theorem]{Definition}
\newtheorem{defn}[theorem]{Definition}
\theoremstyle{remark}
\newtheorem{remark}[theorem]{Remark}
\newtheorem{rmk}[theorem]{Remark}
\crefname{thm}{Theorem}{Theorems}
\newcommand{\cC}{\mathcal{C}}
\newcommand{\cD}{\mathcal{D}}
\newcommand{\cI}{{\mathcal{I}}}
\newcommand{\cJ}{\mathcal{J}}
\newcommand{\cK}{\mathcal{K}}
\newcommand{\cM}{\mathcal{M}}
\newcommand{\cN}{\mathcal{N}}
\newcommand{\cS}{\mathcal{S}}
\newcommand{\op}{\mathrm{op}}
\newcommand{\cat}{\cC\!\mathit{at}}
\newcommand{\set}{\cS\!\mathit{et}}
\newcommand{\sset}{\mathit{s}\set}
\newcommand{\ssset}{\mathit{ss}\set}
\newcommand{\cset}{\mathit{c}\set}
\newcommand{\msset}{\mathit{s}\set^{+}}
\newcommand{\catfin}{cat}
\newcommand{\pder}{\mathit{p}\cD\!\mathit{er}}
\DeclareMathOperator{\id}{id}
\DeclareMathOperator{\ho}{ho}
\newcommand{\lims}[1]{{#1}_\ell}
\newcommand{\rims}[1]{{#1}_r}
\newcommand{\sublurie}{{\mathrm{qcat}}}
\newcommand{\subrezk}{{\mathrm{css}}}
\newcommand{\subsattler}{{\mathrm{HoTT}}}
\newcommand{\subdkls}{{\mathrm{cJ}}}
\newcommand*\lcolon{\mathrel{:}}
\newcommand*\rcolon{\mathrel{:}}
\begin{document}

\author{Philip Hackney}
\address{Department of Mathematics,
University of Louisiana at Lafayette
}
\email{philip@phck.net} 

\author{Martina Rovelli}
\address{Department of Mathematics and Statistics, 
University of Massachusetts 
Amherst
}
\email{rovelli@math.umass.edu} 

\thanks{This material is based upon work supported by the National Science Foundation under Grant No.\ DMS-1440140 while the authors were in residence at the Mathematical Sciences Research Institute in Berkeley, California, during the Spring 2020 semester.}

\subjclass[2020]{Primary 18N60, 55U35, 18N40; Secondary 18A40, 18N50, 55U10}
\keywords{Quillen model category, cubical set, $(\infty,1)$-category}
\commby{}

\title{Induced model structures for higher categories}

\begin{abstract}
We give a new criterion guaranteeing existence of model structures left-induced along a functor admitting both adjoints. This works under the hypothesis that the functor induces idempotent adjunctions at the homotopy category level. As an application, we construct new model structures on cubical sets, prederivators, marked simplicial sets and simplicial spaces modeling $\infty$-categories and $\infty$-groupoids.
\end{abstract}

\maketitle

\section*{Introduction}

It is common when working in abstract homotopy theory to deal with several equivalent Quillen model categories, each with their own strengths and weaknesses. 
This even extends to Quillen model structures on a single category --- for instance, in categories of diagrams of a fixed shape in a model category $\cM$, there are often both projective and injective model structures, each of which have their place. 
It is in this spirit that we present the discovery of several new model structures for $\infty$-categories and $\infty$-groupoids. 
In particular, we give model structures supported on categories of cubical sets, prederivators, bisimplicial sets, and marked simplicial sets.  
For the purposes of the introduction, we focus on the first of these.

Cubical sets are presheaves on a category of cubes. 
But there are many possible categories of cubes, and there is a tension between the simplicity of the cube category and the expressivity of the corresponding category of cubical sets. 
We take this opportunity to point the reader to the introduction of \cite{CavalloMortbergSwan:UCMUTT} for an overview of what is known about various choices in the context of Homotopy Type Theory and Univalent Foundations.
In our case, we consider the category of cubes as a full subcategory of the category of posets, which is the same rich context that Kapulkin and Voevodsky operate under in \cite{KV}.  
This is a homotopically challenging indexing category to work with, as it does not admit a natural generalized Reedy structure.
Our first main result is \cref{cor cubical QE}, which is about the existence of induced model structures on cubical sets along the triangulation functor from the category of cubical sets to the category of simplicial sets. 
These model structures are (left- or right-) induced either from the Joyal or the Kan--Quillen model structure on the category of simplicial sets.
In all cases, the triangulation functor becomes both a left and a right Quillen equivalence.
On the other hand, in \cite{DohertyKapulkinLindseySattler:CMI1C}, a model structure for $(\infty,1)$-categories was given on a different category of cubical sets; in \cref{theorem comparison with dkls} we show that the natural comparison between the two categories of cubical sets is a Quillen equivalence between this one and the model structure left-induced from the Joyal model structure.

The main technical tools we use are lifting theorems for model structures in the presence of adjoint strings.
Let $\cM$ be a model category, $\cN$ be a (bicomplete) category, and suppose we have a string of adjoint functors
\begin{equation*}\label{adjoint string}
\begin{tikzcd}[column sep=large]
    \cN  \rar["F" description] 
    \rar[phantom, bend right=18, "\scriptscriptstyle\perp"]  
    \rar[phantom, bend left=18, "\scriptscriptstyle\perp"] 
    & \cM
    \lar[bend right=30, "L" swap] 
    \lar[bend left=30, "R"] 
\end{tikzcd}
\end{equation*}
(which below we will write more compactly as $F \lcolon \cN \lrl \cM \rcolon L,R$).
In \cite{DrummondColeHackney}, Drummond-Cole and the first author showed that if $\cM$ is cofibrantly generated and if the adjunction $FL\dashv FR$ on $\cM$ is a Quillen adjunction, then there exists a \emph{right-induced} model structure on $\cN$, where weak equivalences and fibrations in $\cN$ are created by $F$.
In that paper, the question was posed about whether one could guarantee a \emph{left-induced} model structure on $\cN$, that is, one where the weak equivalences and \emph{co}fibrations are created by $F$.
We give a partial answer.
\begin{utheorem}[Theorem~\ref{existenceleft} and Proposition~\ref{Quillenidempotent2}]\label{thm intro left induced}
Suppose that we have an adjoint string $F \lcolon \cN \lrl \cM \rcolon L,R$ with $\cN$ a locally presentable category and $\cM$ an accessible model category.
If $FL\dashv FR$ is a Quillen adjunction and the adjoint string between homotopy categories
$\ho\cN \lrl \ho\cM$
is an \emph{idempotent} adjoint string, then $\cN$ admits a model structure left-induced along $F$.
This occurs in the special case when $L$ or $R$ is fully faithful.
\end{utheorem}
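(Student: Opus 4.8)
The plan is to verify that the classes created by $F$ assemble into a model structure on $\cN$: declare a map $f$ a weak equivalence, respectively a cofibration, exactly when $Ff$ is one in $\cM$, writing $\mathcal{W}_\cN = F^{-1}\mathcal{W}$ and $\mathcal{C}_\cN = F^{-1}\mathcal{C}$, and force the fibrations by the right lifting property. Since $\mathcal{W}_\cN$ is a preimage it inherits the two-out-of-three property, so by the usual recognition principle the existence of the model structure reduces to two tasks: (i) showing that $\mathcal{C}_\cN$ and the trivial cofibrations $F^{-1}(\mathcal{C}\cap\mathcal{W})$ are the left classes of weak factorization systems on $\cN$, and (ii) the acyclicity condition that every map right orthogonal to $\mathcal{C}_\cN$ lies in $\mathcal{W}_\cN$. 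Because $\cN$ is locally presentable and $\cM$ is accessible, I would produce the underlying accessible weak factorization systems by left-lifting the two factorization systems of $\cM$ along $F$, following the accessible weak factorization system methods of Garner--Kedziorek--Riehl; the substance is then to pin their left classes down to the prescribed preimages and to check acyclicity, after which the compatibility of the two systems is formal.

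Acyclicity is the easy half, and it uses only that $FL \dashv FR$ is a Quillen adjunction. As $FL$ is left Quillen it preserves cofibrations, so for any cofibration $c$ of $\cM$ the map $Lc$ satisfies $FLc \in \mathcal{C}$, that is, $Lc \in \mathcal{C}_\cN$. Hence a map $p$ right orthogonal to all of $\mathcal{C}_\cN$ is in particular right orthogonal to every $Lc$; transposing across the adjunction $L \dashv F$ shows that $Fp$ is right orthogonal to every cofibration of $\cM$, so $Fp$ is a trivial fibration and therefore a weak equivalence. Thus $p \in \mathcal{W}_\cN$, giving (ii) unconditionally.

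The main obstacle is task (i), which amounts to showing that a map $f$ with $Ff$ in the left class $\mathcal{L}$ of one of the two factorization systems of $\cM$ is left orthogonal, \emph{in} $\cN$, to every $p$ with $Fp$ in the corresponding right class $\mathcal{R}$. One containment is formal from $FR$ being right Quillen: for $r$ in $\mathcal{R}$ the map $Rr$ has $FRr \in \mathcal{R}$, so a map left orthogonal to all such $Rr$ has, after transposing across $F \dashv R$, its image $Ff$ left orthogonal to $r$; hence every left-lifted cofibration is a genuine $F$-cofibration. The reverse containment is the crux: from a solution of the lifting problem found after applying $F$ one must manufacture an honest lift upstairs. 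This is exactly where idempotency of the derived adjoint string $\ho\cN \lrl \ho\cM$ is indispensable. For an idempotent adjunction the comparison (co)units are invertible on the essential images of the adjoints, so idempotency of both halves of the derived string identifies $\ho\cN$ with a localization of $\ho\cM$ that is at once reflective and coreflective. This invertibility is precisely what lets a lift produced after applying $F$, which lives in that image, be transported back to a homotopy lift in $\cN$, after which the model-category factorizations rectify it to a strict solution. I expect the care needed to keep all of this within suitable (co)fibrant objects, so that the derived identities apply on the nose, to be the technical heart of \cref{existenceleft}.

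Finally, to obtain the special case of \cref{Quillenidempotent2} I would show that full faithfulness of $L$, or of $R$, forces the derived string to be idempotent, so that \cref{existenceleft} applies. For an adjoint triple $L \dashv F \dashv R$ the adjoint $L$ is fully faithful precisely when the unit $\id_\cM \to FL$ is invertible, and $R$ is fully faithful precisely when the counit $FR \to \id_\cM$ is invertible; these two conditions are equivalent, so full faithfulness of either outer adjoint yields $FL \cong \id_\cM \cong FR$ on the nose. Using that $FL$ is left Quillen and $FR$ is right Quillen to derive $L$ and $R$, these isomorphisms pass to the homotopy categories and give $\bar F\,\mathbb{L}L \cong \id_{\ho\cM} \cong \bar F\,\mathbb{R}R$; the unit of $\mathbb{L}L \dashv \bar F$ and the counit of $\bar F \dashv \mathbb{R}R$ are then isomorphisms, so both halves of the derived string are idempotent, as required.
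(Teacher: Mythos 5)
Your third paragraph --- the one you call the technical heart --- is where the proposal breaks. Task (i) does \emph{not} amount to showing that every $f$ with $Ff\in\mathcal{L}$ is left orthogonal in $\cN$ to every $p$ with $Fp\in\mathcal{R}$. The Garner--K\k{e}dziorek--Riehl left-lifting theorem, applied to an accessible wfs on $\cM$ and the left adjoint $F$ between locally presentable categories, produces \emph{unconditionally} a wfs on $\cN$ whose left class is exactly the preimage $F^{-1}\mathcal{L}$ (and whose right class is $\mathrm{rlp}(F^{-1}\mathcal{L})$, not $F^{-1}\mathcal{R}$); there is nothing left to ``pin down''. Worse, the inclusion you identify as the crux, $F^{-1}\mathcal{R}\subseteq \mathrm{rlp}(F^{-1}\mathcal{L})$, is false in general: it would say that $F$ creates the right classes as well as the left ones, forcing $\lims{\cN}$ and $\rims{\cN}$ to coincide, and it fails concretely in the paper's own examples. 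Take any non-isomorphism $f$ of $\cN$ with $Ff$ an isomorphism --- for instance $\Delta[1]^{\flat}\to\Delta[1]^{\sharp}$ in $\msset$ --- then $Ff$ lies in both $\mathcal{L}$ and $\mathcal{R}$, yet $f$ cannot lift against itself (a map orthogonal to itself is an isomorphism). Finally, the mechanism you propose for this step is not an argument: invertibility of (co)units in $\ho\cN\lrl\ho\cM$ can at best produce lifts up to homotopy, and strict lifting data cannot be ``rectified'' from homotopy-category isomorphisms; this is also not how the paper uses idempotency.

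By contrast, your second paragraph is correct, and it is a genuinely different route from the paper's: the paper never verifies the acyclicity condition by transposition, but instead checks the hypotheses of the cylinder object argument (\cref{cylinderobject}), taking $QX=L((FX)^c)$ with $\phi_X=\epsilon^h_X$ --- this is precisely where homotopy idempotency is consumed, as it is what makes $F\phi_X$ a weak equivalence --- and building cylinders by applying $L$ to a cylinder on $(FX)^c$, using that $FL$ is left Quillen together with Ken Brown's lemma; \cref{Quillenidempotent2} then translates idempotency of the derived string into this homotopy idempotency, and the fully faithful case is handled as in your last paragraph. But notice the internal tension you have left unresolved: your transposition argument for acyclicity uses only that $FL$ preserves cofibrations, never idempotency, so on your own reduction (unconditional wfs lifting plus acyclicity) the idempotency hypothesis would play no role whatsoever in \cref{existenceleft}. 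Given that the paper presents that hypothesis as the key new ingredient and describes its result as only a partial answer to the Drummond-Cole--Hackney question, this discrepancy demanded scrutiny: either your reduction hides a gap, or you have proven something strictly stronger than the statement. Instead of confronting it, you relocated idempotency to a step that is both unnecessary and false, which is why the proposal as written does not constitute a proof.
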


In the theorem statement, the homotopy category $\ho\cN$ is obtained by inverting all morphisms which are sent by $F$ to weak equivalences in $\cM$.
An \emph{idempotent} adjoint string is an adjoint string where the two constituent adjunctions are idempotent adjunctions.

We should make clear that, just as the aforementioned Drummond-Cole--Hackney result rests on Kan's theorem for right-induced model structures \cite[Theorem 11.3.2]{Hirschhorn:MCL}, so too does our present theorem rest on the the Acyclicity Theorem of \cite{GarnerKedziorekRiehl,HKRS}.

\subsection*{Acknowledgements}
We would like to thank Gabriel C.~Drummond-Cole, Richard Garner, Chris Kapulkin, Viktoriya Ozornova, Emily Riehl and Christian Sattler for useful input, suggestions, and encouragement.

\section{The abstract framework and results}

\subsection{Strings of adjoint functors}
We start by recalling the terminology related to strings of adjoint functors.
A pair of adjoint functors with $F$ left adjoint to $G$ will be denoted by either $F \lcolon \cM \rightleftarrows \cN \rcolon G$ or $F \dashv G$, depending on whether or not we wish to emphasize the (co)domains of the functors.

\begin{defn}
Let $\cM$ and $\cN$ be categories. A \emph{string of adjoint functors} (or \emph{adjoint string}) $F \lcolon \cN \lrl \cM \rcolon L,R $ consists of functors $F\colon\cN\to\cM$ and $L,R\colon\cM\to\cN$ that form adjunctions $L\dashv F$ and $F\dashv R$.
\end{defn}

\begin{rmk}
Given any string of adjoint functors $F \lcolon \cN \lrl \cM \rcolon L,R $, the adjunctions
$F\lcolon\cN\rightleftarrows\cM\rcolon R$ and $L\lcolon\cM\rightleftarrows\cN\rcolon F$
can be composed to obtain an adjunction
$F L\lcolon\cM\rightleftarrows\cM\rcolon F R$.
\end{rmk}

The following gives a large source of examples of strings of adjoint functors.

\begin{rmk}\label{remark left right kan extension adjoint triple}
Let $\cS$ be a bicomplete category. 
Any functor $f\colon \cI\to \cJ$ between small categories induces a string of adjoint functors $f^*\lcolon\cS^{\cJ}\lrl\cS^{\cI}\rcolon f_!,f_*$ where $f^*\colon\cS^{\cJ}\to\cS^\cI$ denotes the restriction functor and $f_!\colon\cS^{\cI}\to\cS^{\cJ}$ (resp.\ $f_*\colon\cS^{\cI}\to\cS^{\cJ}$) denotes the left (resp.\ right) Kan extension along $f$.
\end{rmk}

We consider the following two properties that adjoint strings of functors may have.
The first is studied more deeply in \cite[\textsection2]{Johnstone:RPLC}.

\begin{defn}
\label{characterizationidempotent}
Let $F\lcolon \cN \lrl \cM \rcolon L,R$ be a string of adjoint functors, let $\eta$ and $\epsilon$ be the unit and counit of the adjunction $L\dashv F$, let $\eta'$ and $\epsilon'$ be the unit and counit of the adjunction $F\dashv R$.
We say that the adjoint string is \emph{idempotent} if all of the maps in the following diagram are isomorphisms.
\[ \begin{tikzcd}[column sep=small]
& FLF \ar[dr,Rightarrow,"F\epsilon"] \\
F \ar[rr,Rightarrow,"\id_F"] \ar[dr, Rightarrow,"F\eta'"'] \ar[ur,Rightarrow,"\eta F"] & & F \\
& FRF \ar[ur,Rightarrow,"\epsilon'F"']
\end{tikzcd} \]
This happens if and only if one of the outside maps is an isomorphism.
\end{defn}

If $F$ happens to be fully faithful, then $F\epsilon$ is an isomorphism by \cite[Theorem IV.3.1]{MacLane}, hence the string is idempotent
(if $F$ is conservative then the converse also holds).
Another, different, special case of idempotency is that of a `fully faithful' string of adjoint functors, which we now recall. 

\begin{defn}
\label{characterizationfullyfaithful}
A string of adjoint functors $F\lcolon \cN \lrl \cM \rcolon L,R $ is said to be \emph{fully faithful} if both $L$ and $R$ are fully faithful.
If one of $L$ or $R$ is fully faithful, so is the other (see \cite[Lemma~1.3]{DyckhoffTholen:EMPPPC}).
\end{defn}

\begin{rmk}
Any fully faithful string of adjoint functors $F \lcolon \cN \lrl \cM \rcolon L,R $ is an idempotent string of adjoint functors.
This is because $L$ being fully faithful is equivalent to the unit $\eta\colon \id_{\cM}\Rightarrow FL$ of the adjunction $L\dashv F$ being an isomorphism  (alternatively, $R$ being fully faithful is equivalent to the counit $\epsilon'$ of the adjunction $F \dashv R$ being an isomorphism) by \cite[Theorem IV.3.1]{MacLane}.
\end{rmk}

\begin{rmk}
\label{adjointtriplefromchangeofshape}
Let $\cS$ be a bicomplete category. If $f\colon\cI\to\cJ$ is a fully faithful functor between small categories, then the string of adjoint functors $f^*\lcolon\cS^{\cJ}\lrl\cS^{\cI}\rcolon f_!,f_*$ from Remark~\ref{remark left right kan extension adjoint triple} is fully faithful.
\end{rmk}

\subsection{Model structures induced via adjoint strings}

We discuss situations in which one can transfer a model structure along the middle functor of a string of adjoint functors. 
In this paper, model categories will admit all small limits and colimits (which we refer to as `bicomplete') and will be assumed to come equipped with functorial factorizations.
In particular, a model category $\cM$ comes equipped with a natural transformation $(-)^c \Rightarrow \id_{\cM}$ so that each component $X^c \to X$ is an acyclic fibration from a cofibrant object (and dually for $\id_{\cM} \Rightarrow (-)^f$).
The term `left Quillen functor' will be synonymous with `left adjoint in a Quillen adjunction', and `left Quillen equivalence' will mean `left adjoint in a Quillen equivalence' (and similarly for right adjoints).
We will often be interested in the accessible model categories of \cite[Definition 3.1.6]{HKRS}, which are a generalization of Jeff Smith's combinatorial model categories (see, e.g., \cite[\S A.2.6]{htt}).

\begin{defn}
Let $F\colon\cN\to\cM$ be a functor, and $\cM$ a model category. The \emph{left-induced model structure} $\lims{\cN}$ (resp.~\emph{right-induced model structure} $\rims{\cN}$) on $\cN$, when it exists, is the model structure in which the cofibrations (resp.~the fibrations) and the weak equivalences are created by $F$.
\end{defn}

The following criterion for existence of right-induced model structure along $F$ in the presence of a string of adjoint functors $L\dashv F\dashv R$ was given by Drummond-Cole and the first author.

\begin{thm}[{\cite[Theorem~2.3]{DrummondColeHackney}}]
\label{existenceright}
Let $F \lcolon \cN \lrl \cM \rcolon L,R $ be a string of adjoint functors, where $\cM$ is a cofibrantly generated model category and $\cN$ is a bicomplete category. 
If $F L\lcolon\cM\rightleftarrows\cM \rcolon F R$ is a Quillen adjunction, then the category $\cN$ supports the right-induced model structure $\rims{\cN}$. 
Further, the functor $F\colon\rims{\cN}\to\cM$ is both left and right Quillen.
\end{thm}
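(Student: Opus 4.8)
The plan is to apply Kan's transfer theorem \cite[Theorem~11.3.2]{Hirschhorn:MCL} to the adjunction $L \dashv F$. Fix sets $I$ and $J$ of generating cofibrations and generating acyclic cofibrations for $\cM$. Kan's theorem produces the right-induced structure on $\cN$, cofibrantly generated with generating cofibrations $LI = \{Li : i \in I\}$ and generating acyclic cofibrations $LJ$, provided that (i) the sets $LI$ and $LJ$ permit the small object argument, and (ii) the functor $F$ sends every relative $LJ$-cell complex to a weak equivalence in $\cM$. Throughout, the decisive point is that $F$ is not merely a right adjoint (to $L$) but also a left adjoint (to $R$); in particular $F$ preserves all colimits, and both conditions will be verified by exploiting this.

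For (i), I would recall that since $\cM$ is cofibrantly generated the domains of the maps in $I$ and $J$ are small in $\cM$. Because $F$ is a left adjoint it preserves filtered colimits, so the adjunction isomorphism $\cN(LA,-) \cong \cM(A,F-)$ shows that $L$ carries small objects of $\cM$ to small objects of $\cN$. Hence the domains of $LI$ and $LJ$ are small in $\cN$, and these sets permit the small object argument.

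Condition (ii) is the heart of the argument, and is where the hypothesis on $FL \dashv FR$ enters; I expect this to be the main obstacle. Let $A \to B$ be a relative $LJ$-cell complex in $\cN$, that is, a transfinite composite of pushouts of coproducts of maps in $LJ$. Since $F$ preserves all colimits, applying $F$ produces a relative $FLJ$-cell complex $FA \to FB$ in $\cM$, using that $F$ commutes with the pushouts and the transfinite composite and that $F(Lj) = (FL)(j)$. Now $FL$ is left Quillen by hypothesis, so it sends the acyclic cofibrations $J$ to acyclic cofibrations; thus every map of $FLJ$ is an acyclic cofibration of $\cM$. As acyclic cofibrations are closed under pushout, coproduct, and transfinite composition, the relative $FLJ$-cell complex $FA \to FB$ is itself an acyclic cofibration, in particular a weak equivalence. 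This verifies (ii), and Kan's theorem yields the right-induced model structure $\rims{\cN}$.

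Finally, $F \colon \rims{\cN} \to \cM$ is right Quillen essentially by construction: its fibrations and weak equivalences are created by $F$, so $F$ preserves fibrations and acyclic fibrations, and its left adjoint is $L$. For the left Quillen claim I use the adjunction $F \dashv R$. Every cofibration of $\rims{\cN}$ is a retract of a relative $LI$-cell complex; since $FL$ preserves cofibrations and $F$ preserves colimits and retracts, $F$ carries such a map to a retract of a relative $FLI$-cell complex built on cofibrations, hence to a cofibration of $\cM$. As $F$ also preserves weak equivalences, it preserves acyclic cofibrations as well, so $F$ is left Quillen. The only nonformal step in the whole proof is the acyclicity condition (ii); everything else follows formally from the two-sided adjoint structure of $F$ together with the cofibrant generation of $\cM$.
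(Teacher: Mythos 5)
Your proposal is correct and follows essentially the same route as the proof the paper relies on (the original argument of Drummond-Cole--Hackney, which, as the paper notes, rests on Kan's transfer theorem \cite[Theorem 11.3.2]{Hirschhorn:MCL}): transfer along $L \dashv F$, use cocontinuity of $F$ to identify images of relative $LI$- and $LJ$-cell complexes with relative $FLI$- and $FLJ$-cell complexes, invoke the left Quillen hypothesis on $FL$ for acyclicity and smallness, and conclude that $F$ is left Quillen via the retract-of-cell-complexes description of cofibrations.
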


Under the additional hypothesis that $F$ is fully faithful, Campbell observed in \cite[Proposition 2.2]{Campbell:JCC} that the model structure on $\cN$ is also left-induced.

The following makes no reference to the functor $R$, but we state it this way as it is most meaningful in the case when $FL \dashv FR$ is a Quillen adjunction on $\cM$.

\begin{definition}[Homotopy idempotent string]\label{def homotopy idempotent}
Let $\cM$ be a model category and let $F\lcolon \cN \lrl \cM \rcolon L,R $ be a string of adjoint functors.
\begin{itemize}[leftmargin=*]
\item For each $X$ in $\cN$, define a map $\epsilon_X^h$ of $\cN$ as the composite
\[
    \epsilon_X^h\colon L((FX)^c) \to LFX \to X
\]
whose first map is $L$ applied to the cofibrant replacement $(FX)^c \overset\sim\to FX$ in $\cM$, and whose second map is the counit of $L\dashv F$.
Notice that $\epsilon^h \colon L((F-)^c) \Rightarrow \id_{\cN}$ is a natural transformation.
\item We say that the string of adjoint functors is \emph{homotopy idempotent} if the map  $F\epsilon_X^h \colon FL((FX)^c) \to FX$ is a weak equivalence in $\cM$ for every object $X$ in $\cN$.
\end{itemize}
\end{definition}

\begin{remark}\label{remark derived unit}
We have chosen this definition because it is the most convenient for the proof of Theorem~\ref{existenceleft}, but of course there are other formulations (see also Proposition~\ref{Quillenidempotent2} below).
For example, by combining the naturality square for the unit $\eta$ of the adjunction $L\dashv F$ with a triangle identity, we obtain the following commutative diagram in $\cM$:
\[ \begin{tikzcd}
(FX)^c \dar{\eta_{(FX)^c}} \rar[two heads, "\sim"] & FX \dar{\eta_{FX}} \arrow[dr, bend left=15, "="' swap] \\
FL((FX)^c) \rar  & FLFX \rar["F\epsilon_X"] & FX.
\end{tikzcd} \]
By two-of-three, one sees that $F\epsilon_X^h$ is a weak equivalence in $\cM$ if and only if the map $\eta_{(FX)^c}\colon(FX)^c\to FL((FX)^c)$ is a weak equivalence.
\end{remark}

\begin{rmk}
A string of adjoint functors $F \lcolon \cN \lrl \cM \rcolon L,R $ is automatically
a homotopy idempotent string in the following cases:
\begin{itemize}[leftmargin=*]
    \item when the adjoint string $L\dashv F\dashv R$ is idempotent, the adjunction $FL\dashv FR$ is a Quillen pair and every object in $\cM$ is cofibrant, or
    \item when the adjoint string $L\dashv F\dashv R$ is fully faithful.
\end{itemize}
\end{rmk}

We now give a criterion for the existence of the left-induced model structure along $F$ in presence of a string of adjoint functors $L\dashv F\dashv R$.

\begin{thm}
\label{existenceleft}
Let $F \lcolon \cN \lrl \cM \rcolon L,R $ be a string of adjoint functors, where $\cM$ is an accessible model category and $\cN$ is a locally presentable category. 
If $FL \dashv FR$ is a Quillen adjunction and $F\lcolon \cN \lrl \cM \rcolon L,R $ is a homotopy idempotent string of adjoint functors, then the category $\cN$ supports the left-induced model structure $\lims{\cN}$.
Further, the functor $F\colon\lims{\cN}\to\cM$ is both left and right Quillen.
\end{thm}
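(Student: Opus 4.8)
The plan is to deduce existence from the Acyclicity Theorem of \cite{GarnerKedziorekRiehl,HKRS}, which is the left-handed counterpart of the Kan recognition theorem underlying \cref{existenceright}. Since $\cN$ is locally presentable, $\cM$ is an accessible model category, and $F$ carries a right adjoint $R$, that machinery should supply two accessible weak factorization systems on $\cN$ obtained by left-lifting the two weak factorization systems of $\cM$ along $F$: their left classes are, respectively, the maps $i$ with $Fi$ a cofibration and the maps $i$ with $Fi$ an acyclic cofibration. Declaring the weak equivalences to be the maps inverted by $F$ gives a class that is automatically closed under retracts and satisfies two-out-of-three, because the weak equivalences of $\cM$ are; moreover it is an accessible class, as $F$ is an accessible functor and the weak equivalences of $\cM$ form an accessible subcategory. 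Granting all this, the existence of $\lims{\cN}$ reduces to a single acyclicity condition: every map of $\cN$ with the right lifting property against all $F$-cofibrations must be an $F$-weak equivalence. I should also record the formal bookkeeping that $F^{-1}(\text{acyclic cofibrations}) = F^{-1}(\text{cofibrations}) \cap F^{-1}(\text{weak equivalences})$, so that the acyclic-cofibration class is literally (cofibrations)$\cap$(weak equivalences); the usual factor-and-retract argument then yields acyclic fibrations $=$ fibrations $\cap$ weak equivalences, so that acyclicity is the only non-formal ingredient.

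The key step is the verification of acyclicity. Here I would exploit that $FL$ is left Quillen (one half of the hypothesis that $FL \dashv FR$ is a Quillen adjunction): it sends each cofibration $j$ of $\cM$ to a cofibration $FL(j)$, so $L(j)$ is an $F$-cofibration of $\cN$. Consequently, any $p$ with the right lifting property against all $F$-cofibrations in particular lifts against every $L(j)$; transposing across $L \dashv F$, the map $Fp$ acquires the right lifting property against every cofibration of $\cM$, hence $Fp$ is an acyclic fibration and $p$ is an $F$-weak equivalence. This settles acyclicity. I had expected the homotopy idempotence hypothesis of \cref{def homotopy idempotent} to be spent precisely here, in the guise of \cref{remark derived unit} (that $\eta_{(FX)^c}\colon (FX)^c \to FL((FX)^c)$ is a weak equivalence); so a point I would want to pin down is whether homotopy idempotence is genuinely needed for bare existence, or whether — as the transposition argument suggests — it is really consumed only in guaranteeing that the left-lifted weak factorization systems exist with the stated left classes, in identifying the resulting weak equivalences as an accessible class, and in the downstream refinements (Quillen equivalence) rather than in the acyclicity check itself.

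For the final clause, observe that $F\colon \lims{\cN} \to \cM$ is left Quillen by construction, since it creates both the cofibrations and the weak equivalences. To see that $F$ is simultaneously right Quillen it suffices to check that its left adjoint $L$ is left Quillen, and this is immediate from the same input: $L$ carries an (acyclic) cofibration $j$ of $\cM$ to $L(j)$, which is an (acyclic) cofibration of $\lims{\cN}$ exactly because $FL(j)$ is one in $\cM$. Thus $L \dashv F$ is itself a Quillen adjunction, and $F$ is both left and right Quillen. I expect the main obstacle to be not any of these lifting manipulations but the correct invocation of the accessible weak factorization system lifting theorem — verifying its locally-presentable/accessible hypotheses and that the abstractly produced left classes are the intended ones — since once that apparatus is in place the acyclicity condition and the bi-Quillen property both follow formally from $FL$ being left Quillen.
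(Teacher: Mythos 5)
Your proposal is correct, but it is not the paper's argument --- and the difference is substantive. The paper never touches the acyclicity criterion directly: it applies the cylinder object argument, \cref{cylinderobject}, with $QX = L((FX)^c)$ and $\phi_X = \epsilon^h_X$, so that condition (1) of that theorem is precisely the homotopy idempotence hypothesis of \cref{def homotopy idempotent}, condition (2) is naturality of $\epsilon^h$, and condition (3) is obtained by applying $L$ to a cylinder factorization of $(FX)^c$ in $\cM$ and using that $FL$ is left Quillen together with Ken Brown's lemma. You instead descend to the result that \cref{cylinderobject} itself rests on, the acyclicity theorem of \cite{GarnerKedziorekRiehl,HKRS}: since $F$ has the right adjoint $R$, $\cN$ is locally presentable and $\cM$ is accessible, both weak factorization systems of $\cM$ left-lift along $F$, and existence of $\lims{\cN}$ reduces to showing that any $p$ with the right lifting property against $F^{-1}(\mathrm{cofibrations})$ lies in $F^{-1}(\mathrm{weak\ equivalences})$. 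Your verification --- $Lj$ is an $F$-cofibration because $FL$ is left Quillen, and transposing across $L \dashv F$ converts lifting of $p$ against all $Lj$ into lifting of $Fp$ against all cofibrations $j$, so that $Fp$ is an acyclic fibration --- is sound, and is the exact dual of the Drummond-Cole--Hackney proof of \cref{existenceright}. The bi-Quillen clause is handled the same way in both proofs.

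This settles the question you flag, in the direction you suspected: homotopy idempotence is consumed \emph{nowhere} in your argument --- not in the left-lifting of the weak factorization systems (which needs only accessibility and the right adjoint $R$), not in the acyclicity check (which needs only $FL$ left Quillen), and not in the bi-Quillen statement. So, granting the acyclicity theorem in its corrected form, you have proved the stronger statement with that hypothesis deleted; it is sufficient but not necessary for existence. As a consistency check, for a model category $\cM_0$ the string $\Delta \lcolon \cM_0 \lrl \cM_0 \times \cM_0 \rcolon \amalg, \times$ has $FL$ left Quillen but is not homotopy idempotent (the unit $(A,B) \to (A \amalg B, A \amalg B)$ is not a weak equivalence even on cofibrant objects, cf.\ \cref{remark derived unit}), yet the left-induced structure exists: it is $\cM_0$ itself. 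In the paper, the hypothesis earns its keep differently: the cylinder-object route spends it on condition (1), and by \cref{Quillenidempotent2} it is equivalent to full faithfulness of $\overline F$ on homotopy categories, which is what the applications (all fully faithful strings) actually exploit. The trade-off: the paper stays inside the packaged sufficient criterion of \cite{HKRS}, while your route uses the acyclicity theorem raw, is uniform with \cref{existenceright}, and shows the Drummond-Cole--Hackney question has a positive answer in the accessible setting with no idempotence assumption. The one point to nail down in a final write-up is the precise citation of the left-lifting/acyclicity statement from \cite{GarnerKedziorekRiehl}; but since \cref{cylinderobject} is derived from that very result, this is a dependency the paper already carries.
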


The proof of the preceding theorem uses the cylinder object argument, whose statement we recall here for the convenience of the reader. 
The conclusion of Theorem~\ref{cylinderobject} relies on the Garner--Hess--K\k{e}dziorek--Riehl--Shipley Acyclicity Theorem, whose corrected proof may be found in \cite{GarnerKedziorekRiehl}.

\begin{thm}[{\cite[Theorem 2.2.1]{HKRS}}]
\label{cylinderobject}
Let $F\lcolon\cN\rightleftarrows\cM\rcolon G$ be an adjoint pair, $\cM$ an accessible model category and $\cN$ a locally presentable category. Suppose that the following conditions hold.
\begin{enumerate}[leftmargin=*]
\item For every object $X$ of $\mathcal{N}$, there exists a morphism $\phi_X \colon QX \to X$ such that $F\phi_{X}$ is a weak equivalence and $F(QX)$ is cofibrant in $\mathcal{M}$,  \label{cylinder item number one}
\item For each morphism $f\colon X \to Y$ in $\mathcal{N}$  there exists a morphism $Qf\colon QX \to QY$ satisfying $\phi_{Y}\circ Qf =f\circ \phi_{X}$, and \label{cylinder item number two}
\item For every object $X$ of $\mathcal{N}$, there exists a factorization $QX \amalg QX \xrightarrow j\mathrm{Cyl}(QX) \xrightarrow p QX$  of the fold map such that $Fj$ is a cofibration and $Fp$ is a weak equivalence.\label{cylinder item number three}
\end{enumerate}
Then $\cN$ supports the model structure $\lims{\cN}$ left-induced along $F$.
\end{thm}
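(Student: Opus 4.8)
The plan is to deduce the statement from the Acyclicity Theorem of \cite{GarnerKedziorekRiehl,HKRS}, using the three hypotheses only to verify the single acyclicity condition that theorem requires. Write $\cC_F$ and $\mathcal{W}_F$ for the classes of morphisms of $\cN$ that $F$ carries to cofibrations, respectively weak equivalences, of $\cM$; these are to become the cofibrations and weak equivalences of $\lims{\cN}$. Because $F$ is a left adjoint it preserves colimits (in particular the initial object), and because $\cM$ is an accessible model category its cofibrations and trivial cofibrations generate accessible weak factorization systems while $\mathcal{W}_\cM$ is an accessibly embedded subcategory of the arrow category. It follows by the standard structural results of \cite{GarnerKedziorekRiehl,HKRS} that $\cC_F$, $\mathcal{W}_F$ and $\cC_F\cap\mathcal{W}_F$ are accessible and closed under the operations (pushout, transfinite composition, retract) required for the two weak factorization systems on $\cN$---one with left class $\cC_F$, the other with left class $\cC_F\cap\mathcal{W}_F$---to exist, and that $\mathcal{W}_F$ satisfies two-out-of-three. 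Granting this structural input (which is exactly where the accessibility of $\cM$ and the local presentability of $\cN$ are used), the Acyclicity Theorem produces $\lims{\cN}$ as soon as we check the \emph{acyclicity condition}: every $p$ in $\cN$ admitting the right lifting property against all of $\cC_F$ satisfies $Fp\in\mathcal{W}_\cM$. Verifying this is the whole of what remains, and is where the three hypotheses enter.

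So fix $p\colon E\to B$ with the right lifting property against every map in $\cC_F$. By condition~\ref{cylinder item number one} the object $F(QB)$ is cofibrant, so $\emptyset\to QB$ lies in $\cC_F$, and lifting the square with top edge $\emptyset\to E$, bottom edge $\phi_B\colon QB\to B$, and right edge $p$ produces a section-like map $s\colon QB\to E$ with $p\circ s=\phi_B$. Applying $F$ gives $Fp\circ Fs=F\phi_B$, a weak equivalence by condition~\ref{cylinder item number one}. To leverage $s$, I next produce a homotopy. By condition~\ref{cylinder item number two} applied to $p$ there is $Qp\colon QE\to QB$ with $\phi_B\circ Qp=p\circ\phi_E$, so the maps $s\circ Qp$ and $\phi_E$ from $QE$ to $E$ satisfy $p\circ(s\circ Qp)=p\circ\phi_E$. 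By condition~\ref{cylinder item number three} applied to $X=QE$, the map $Fj\colon F(QE)\amalg F(QE)\to F\mathrm{Cyl}(QE)$ is a cofibration and $F\mathrm{Cyl}(QE)\to F(QE)$ is a weak equivalence, so $F\mathrm{Cyl}(QE)$ is a cylinder object for the cofibrant object $F(QE)$.

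Since $j\in\cC_F$, I lift the square whose top edge is $(s\circ Qp,\ \phi_E)\colon QE\amalg QE\to E$, whose right edge is $p$, and whose bottom edge is $\mathrm{Cyl}(QE)\to QE\xrightarrow{\phi_E}E\xrightarrow{p}B$, the first arrow being the projection of condition~\ref{cylinder item number three}. The restrictions of this bottom edge to the two summands both equal $p\circ\phi_E$, matching the top edge after composing with $p$, so the square commutes and a lift $H\colon\mathrm{Cyl}(QE)\to E$ exists. Applying $F$, the map $FH$ is a left homotopy through the cylinder $F\mathrm{Cyl}(QE)$ from $F(s\circ Qp)$ to $F\phi_E$. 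Left-homotopic maps out of the cofibrant object $F(QE)$ agree in $\ho\cM$; since $F\phi_E$ is a weak equivalence (again by condition~\ref{cylinder item number one}), so is $F(s\circ Qp)=Fs\circ F(Qp)$.

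Finally I combine the two relations. With $f=F(Qp)$, $g=Fs$ and $h=Fp$, the composite $g\circ f=F(s\circ Qp)$ is a weak equivalence by the previous paragraph and the composite $h\circ g=F\phi_B$ is a weak equivalence by the second paragraph. The weak equivalences of any model category satisfy two-out-of-six, so all of $f$, $g$, $h$ are weak equivalences; in particular $Fp=h\in\mathcal{W}_\cM$. This verifies the acyclicity condition, and the Acyclicity Theorem then yields the left-induced model structure $\lims{\cN}$. I expect the main obstacle to be engineering the cylinder lifting problem of the third paragraph so that the resulting homotopy transports the weak-equivalence status of $F\phi_E$ onto $Fs\circ F(Qp)$; once the two composite weak equivalences are in hand, the passage to $Fp$ via two-out-of-six is immediate, and the structural accessibility claims of the first paragraph---routine given \cite{GarnerKedziorekRiehl,HKRS}---are precisely what make the required factorizations available and hence cannot be dispensed with.
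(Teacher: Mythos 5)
Your argument is correct. The paper offers no proof of this statement of its own---it imports it verbatim as \cite[Theorem 2.2.1]{HKRS}, noting only that it rests on the Acyclicity Theorem whose corrected proof appears in \cite{GarnerKedziorekRiehl}---and what you have written is essentially a faithful reconstruction of the proof in that cited source: left-lift the two accessible weak factorization systems of $\cM$ along $F$ (this is where accessibility of $\cM$ and local presentability of $\cN$ enter), reduce existence of $\lims{\cN}$ to the acyclicity condition, and verify that condition by the cylinder object argument (a section $s$ obtained by lifting against $\emptyset \to QB$, a homotopy obtained by lifting against $j$, and a final application of two-out-of-six).
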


The following proof includes a verification of the conditions of \cref{cylinderobject}.

\begin{proof}[Proof of \cref{existenceleft}]
For an object $X$ of $\cN$, we define $QX$ to be $L((FX)^c)$ and let $\phi_X$ denote the map $\epsilon_X^h$ from \cref{def homotopy idempotent}:
\[ \phi_X \coloneqq \epsilon^h_X \colon QX = L((FX)^c) \xrightarrow{L(\overset\sim\twoheadrightarrow)} LFX \xrightarrow{\epsilon_X} X.\]
We now verify the conditions to apply \cref{cylinderobject}.
The map $F\epsilon^h_X$ is a weak equivalence in $\cM$ by assumption, and, since $FL$ is left Quillen, $FQX=FL((FX)^c)$ is a cofibrant object of $\mathcal{M}$. Thus we have established Condition \eqref{cylinder item number one}.
Condition \eqref{cylinder item number two} holds because $\epsilon^h$ is a natural transformation $Q\Rightarrow \id_{\cN}$.

For Condition \eqref{cylinder item number three}, notice that we have a factorization
\[
\begin{tikzcd}
	(FX)^c \amalg (FX)^c \rar[rightarrowtail, "J"] & \mathrm{Cyl}((FX)^c) \rar[two heads, "\sim" swap, "P"] & (FX)^c
\end{tikzcd}
\]
of the fold map of $(FX)^c$ in $\mathcal{M}$.
The objects $\mathrm{Cyl}((FX)^c)$ and $(FX)^c$ are cofibrant in $\cM$, and by Ken Brown's lemma \cite[Corollary 7.7.2(1)]{Hirschhorn:MCL} the left Quillen functor $FL$ preserves weak equivalences between cofibrant objects, hence $FLP$ is a weak equivalence in $\cM$.
Further, $FLJ$ is again a cofibration since $FL$ is left Quillen.
Thus the factorization 
\[
\begin{tikzcd}
	QX \amalg QX \cong L\left[(FX)^c \amalg (FX)^c\right] \rar["LJ"] & L\mathrm{Cyl}((FX)^c) \rar["LP"] & QX
\end{tikzcd}
\]
of the fold map of $QX$ has the desired properties.

We conclude from \cref{cylinderobject} that the left-induced model structure exists. The functor $F$ is left Quillen by construction, and the proof that $F$ is right Quillen is the evident variation from the one appearing in \cite[Theorem 2.3]{DrummondColeHackney} and does not depend on homotopy idempotency.
That is, one uses that $FL$ preserves (acyclic) cofibrations and $F$ reflects them to infer that $L$ is left Quillen.
\end{proof}

\begin{rmk}
A dual version of \cite[Theorem 5.6]{DrummondColeHackney} holds, which allows one to efficiently lift Quillen equivalences in circumstances where Theorem~\ref{existenceleft} holds.
As we will not need this result here we will not repeat the statement (the proof is formally dual), and instead merely note that the last two bullet points of \cite[Theorem 5.6]{DrummondColeHackney} should be replaced with `$F'$ reflects cofibrations and preserves fibrant objects' and `$F$ preserves cofibrations and cofibrant objects', respectively.
\end{rmk}

By the dual of \cite[Proposition 2.4]{DrummondColeHackney}, if $\cM$ is left or right proper, then the same is true for $\lims{\cN}$.
The following proposition guarantees that the left- and right-induced model structures, if they exist, have equivalent homotopy theories.

\begin{prop}\label{prop comparison of left and right}
Let $F\lcolon \cN \lrl \cM \rcolon L,R $ be a string of adjoint functors, and $\cM$ a model category. If the left-induced model structure $\lims{\cN}$ and the right-induced model structure $\rims{\cN}$ both exist,
then the adjunction
    $\id_{\cN}\lcolon\rims{\cN}\rightleftarrows\lims{\cN}\rcolon \id_{\cN}$
    is a Quillen equivalence.
\end{prop}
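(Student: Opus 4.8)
The plan is to reduce the statement to showing that the identity adjunction is a \emph{Quillen adjunction}, after which the Quillen equivalence will be immediate. The crucial preliminary observation is that the two model structures have the \emph{same} weak equivalences: by definition both $\rims{\cN}$ and $\lims{\cN}$ create their weak equivalences along $F$, so a morphism $f$ of $\cN$ is a weak equivalence in either structure exactly when $Ff$ is one in $\cM$. Granting this, suppose we already know that $\id_{\cN}\lcolon \rims{\cN}\rightleftarrows\lims{\cN}\rcolon\id_{\cN}$ is a Quillen adjunction. For a cofibrant object $X$ of $\rims{\cN}$ and a fibrant object $Y$ of $\lims{\cN}$, a morphism $X\to Y$ and its adjunct are literally the same morphism of $\cN$, and the two notions of weak equivalence coincide, so the defining biconditional for a Quillen equivalence holds trivially. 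Thus the entire content lies in the Quillen adjunction.

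To check that $\id_{\cN}\colon\rims{\cN}\to\lims{\cN}$ is left Quillen I would verify that it preserves cofibrations and acyclic cofibrations. Since the two structures share the same weak equivalences, the acyclic case follows formally from the non-acyclic one, so I would only need the inclusion of classes $\mathrm{Cof}(\rims{\cN})\subseteq\mathrm{Cof}(\lims{\cN})$ (equivalently, by lifting, $\mathrm{Fib}(\lims{\cN})\subseteq\mathrm{Fib}(\rims{\cN})$). Because the cofibrations of $\lims{\cN}$ are precisely the maps sent by $F$ into $\mathrm{Cof}(\cM)$, this inclusion is exactly the assertion that $F\colon\rims{\cN}\to\cM$ preserves cofibrations, i.e.\ that $F$ is left Quillen for the right-induced structure.

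I would establish this by transposing along the adjunction $F\dashv R$. Let $c$ be a cofibration of $\rims{\cN}$ and let $g$ be an acyclic fibration of $\cM$; a lifting problem of $Fc$ against $g$ transposes to a lifting problem of $c$ against $Rg$. Now $Rg$ is an acyclic fibration of $\rims{\cN}$ exactly when $FRg$ is an acyclic fibration of $\cM$, and granting this, the lift exists since $c$ has the left lifting property against all acyclic fibrations of $\rims{\cN}$. Transposing back solves the original problem, so $Fc\in\mathrm{Cof}(\cM)$ and hence $c\in\mathrm{Cof}(\lims{\cN})$, as desired.

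The main obstacle is therefore the single fact that $FR$ preserves acyclic fibrations, equivalently that $R\colon\cM\to\rims{\cN}$ is right Quillen, equivalently that $F\colon\rims{\cN}\to\cM$ is left Quillen. The two `easy halves'—that $F$ is right Quillen for $\rims{\cN}$ and left Quillen for $\lims{\cN}$—are immediate, since $F$ creates the respective fibrations/cofibrations together with the weak equivalences; but these do not by themselves yield the cross-preservation above. Rather, $FR$ preserving acyclic fibrations is precisely the content that $FL\dashv FR$ is a Quillen adjunction, equivalently that $F$ is simultaneously left and right Quillen for each induced structure, as recorded in the conclusions of \cref{existenceright,existenceleft}. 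With that input the argument closes, and the dual inclusion $\mathrm{Fib}(\lims{\cN})\subseteq\mathrm{Fib}(\rims{\cN})$ follows symmetrically from $F\colon\lims{\cN}\to\cM$ being right Quillen.
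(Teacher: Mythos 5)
Your proof is correct and is, in substance, the paper's own argument run in dual form. The paper reduces, via the shared class of weak equivalences, to the single inclusion $\mathrm{Fib}(\lims{\cN})\subseteq\mathrm{Fib}(\rims{\cN})$, which it gets from the assertion that $F\colon\lims{\cN}\to\cM$ preserves fibrations together with the fact that $F$ reflects fibrations into $\rims{\cN}$; you instead prove the equivalent inclusion $\mathrm{Cof}(\rims{\cN})\subseteq\mathrm{Cof}(\lims{\cN})$, deriving the assertion that $F\colon\rims{\cN}\to\cM$ preserves cofibrations by transposing along $F\dashv R$ down to the statement that $FR$ preserves acyclic fibrations. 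The one substantive difference is that you make explicit what the paper leaves silent: this cross-preservation is not a formal consequence of the bare hypothesis that both model structures exist. Your required input ($FR$ preserves acyclic fibrations) and the paper's ($F\colon\lims{\cN}\to\cM$ preserves fibrations, which by transposition along $L\dashv F$ is equivalent to $FL$ preserving acyclic cofibrations) are each one half of the condition that $FL\dashv FR$ be a Quillen adjunction; given that both structures exist, these halves are in fact equivalent to each other and to the proposition itself. Both proofs therefore import this input from the conclusions of \cref{existenceright,existenceleft}, i.e.\ both really establish the proposition as it is used in the paper — for induced structures arising from those theorems, as in \cref{corollary fully faithful string} — rather than from bare existence. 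So your appeal to that input is not a gap relative to the paper's own standard of rigor; if anything, flagging it explicitly is a service.
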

\begin{proof}
As the two model structures share the same class of weak equivalences, it is enough to show that $\id_{\cN} \colon \lims{\cN} \to \rims{\cN}$ preserves fibrations.
This holds because $F\colon \lims{\cN} \to \cM$ preserves fibrations and $F\colon \rims{\cN} \to \cM$ reflects them.
\end{proof}

\begin{cor}
\label{corollary fully faithful string}
Let $F\lcolon \cN \lrl \cM \rcolon L,R $ be a fully faithful string of adjoint functors, $\cM$ a combinatorial model category and $\cN$ a locally presentable category.
Then the category $\cN$ admits both the left-induced model structure $\lims{\cN}$ and the right-induced model structure $\rims{\cN}$.
Further, the left diagram below is a diagram of left Quillen equivalences, and the right diagram below is a diagram of right Quillen equivalences.
\[ \begin{tikzcd}[column sep=small]
\rims{\cN} \ar[rr,"\id_{\cN}"] \ar[dr,"F"'] & & \lims{\cN} \ar[dl, "F"] 
&
\rims{\cN} \ar[dr,"F"'] & & \lims{\cN} \ar[dl, "F"]  \ar[ll,"\id_{\cN}"']
\\
& \cM & & & \cM
\end{tikzcd} \]
\end{cor}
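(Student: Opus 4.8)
The plan is to assemble the statement from the two existence theorems, the comparison in \cref{prop comparison of left and right}, and a single verification that $F$ is a Quillen equivalence. First I would record the two formal consequences of full faithfulness: by the remark preceding \cref{existenceleft} a fully faithful string is homotopy idempotent, and (as noted in the excerpt) $\eta\colon\id_{\cM}\Rightarrow FL$ and $\epsilon'\colon FR\Rightarrow\id_{\cM}$ are isomorphisms, so $FL\cong\id_{\cM}\cong FR$ and the adjunction $FL\dashv FR$ is (isomorphic to) the identity adjunction, hence trivially a Quillen adjunction and even a Quillen equivalence. Since a combinatorial model category is cofibrantly generated and accessible and a locally presentable category is bicomplete, \cref{existenceright} produces $\rims{\cN}$ and \cref{existenceleft} produces $\lims{\cN}$; both theorems further assert that $F$ is simultaneously left and right Quillen, so every functor appearing in the two triangles is left (respectively right) Quillen. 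The horizontal comparison $\id_{\cN}\colon\rims{\cN}\rightleftarrows\lims{\cN}\colon\id_{\cN}$ is a Quillen equivalence by \cref{prop comparison of left and right}.

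It then remains to show that $F$ is a Quillen equivalence, and I would reduce this to a single instance. Because the two triangles commute strictly ($F\circ\id_{\cN}=F$) and Quillen equivalences satisfy two-out-of-three along composites of Quillen adjunctions, it suffices to prove that $F\colon\rims{\cN}\to\cM$ is both a left Quillen equivalence (that is, $F\dashv R$ is a Quillen equivalence) and a right Quillen equivalence (that is, $L\dashv F$ is a Quillen equivalence): the corresponding statements for $\lims{\cN}$ then follow by composing with the Quillen equivalence $\id_{\cN}$ of the previous paragraph. Moreover, since $FL\dashv FR$ is a Quillen equivalence and $\cM\xrightarrow{L}\rims{\cN}\xrightarrow{F}\cM$ composes to $FL$, one more application of two-out-of-three reduces the work to verifying just one of $L\dashv F$ or $F\dashv R$ on $\rims{\cN}$.

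I would verify $L\dashv F$ on $\rims{\cN}$ directly through its derived unit and counit; as the weak equivalences of $\rims{\cN}$ are created by $F$, each may be checked after applying $F$. For $X$ cofibrant in $\cM$ the derived unit is $X\xrightarrow{\eta_X}FLX\to F((LX)^f)$, where $(LX)^f$ is a fibrant replacement in $\rims{\cN}$; the first map is an isomorphism because $L$ is fully faithful, and the second is $F$ applied to a weak equivalence of $\rims{\cN}$, hence a weak equivalence in $\cM$. For $A$ fibrant in $\rims{\cN}$ the derived counit is exactly the homotopy counit $\epsilon^h_A\colon L((FA)^c)\to LFA\xrightarrow{\epsilon_A}A$ of \cref{def homotopy idempotent}, and $F\epsilon^h_A$ is a weak equivalence by homotopy idempotency. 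This closes all the reductions and yields both diagrams.

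The step carrying the real (though modest) weight is this last one: identifying the derived unit and counit of $L\dashv F$ and checking they are weak equivalences; everything else is formal bookkeeping with adjoint strings and repeated two-out-of-three. The one genuinely substantive observation is that the derived counit coincides with the homotopy counit $\epsilon^h$, so that homotopy idempotency — automatic for a fully faithful string — is precisely what the argument requires.
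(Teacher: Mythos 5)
Your proof is correct, and it follows the paper's overall skeleton --- existence of both model structures from \cref{existenceright} and \cref{existenceleft} (using that $FL\cong\id_{\cM}$ is trivially Quillen and that a fully faithful string is homotopy idempotent), then transfer of the Quillen-equivalence property from $\rims{\cN}$ to $\lims{\cN}$ via \cref{prop comparison of left and right} and two-out-of-three \cite[Corollary 1.3.15]{Hovey:MC} --- but it diverges in one substantive place. For the statement that $F\colon\rims{\cN}\to\cM$ is both a left and a right Quillen equivalence, the paper simply cites \cite[Theorem 3.2]{FKKR} as a black box, whereas you prove it inline: you use two-out-of-three against the identity Quillen equivalence $FL\dashv FR$ to reduce to the single adjunction $L\dashv F$, and then check its derived unit (an isomorphism $\eta_X$ followed by $F$ of a fibrant replacement, which is a weak equivalence since $F$ creates weak equivalences in $\rims{\cN}$) and derived counit (which you correctly identify with $\epsilon^h_A$ from \cref{def homotopy idempotent}, so that homotopy idempotency is exactly the needed input). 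Both verifications are sound. What your route buys is self-containedness --- the corollary then depends only on results proved or quoted in this paper, and it makes visible that homotopy idempotency of the string is precisely what drives the derived counit condition, nicely tying the corollary to the paper's \cref{Quillenidempotent2}; what the paper's route buys is brevity and a clean separation of concerns, since the FKKR result covers the $\rims{\cN}$ case once and for all without repeating derived-functor bookkeeping.
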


\begin{proof}
As $\cM$ is combinatorial, it is both accessible and cofibrantly generated.
We assumed that $L$ is fully faithful, implying that the functor $FL\cong \id_{\cM}$ is left Quillen, so both Theorem~\ref{existenceleft} and Theorem~\ref{existenceright} apply to show the existence of the indicated model structures.
By \cite[Theorem 3.2]{FKKR}, the functor $F \colon \rims{\cN} \to \cM$ is both a left and right Quillen equivalence.
The fact that $F \colon \lims{\cN} \to \cM$ is both a left and a right Quillen equivalence now follows from \cref{prop comparison of left and right} and two out of three for Quillen equivalences \cite[Corollary 1.3.15]{Hovey:MC}.
\end{proof}

We devote the remainder of this section to Proposition~\ref{Quillenidempotent2}, which gives an interpretation of what being a homotopy idempotent adjoint string (in the sense of \cref{def homotopy idempotent}) means at the level of homotopy categories.
To give the most natural statement of this proposition, we make use of the language of deformable adjunctions from \cite[\S2.2]{RiehlCHT} (originally from \cite[\textsection44.2]{DwyerHirschhornKanSmith}), which will not be needed elsewhere.
The reader who prefers to stay within the world of Quillen model categories should be comforted to know that with some adjustments this is possible, and such a reader is advised to immediately skip down to Remark~\ref{remark stay within Quillen world} and the statement of Proposition~\ref{Quillenidempotent2}.

\begin{rmk}
\label{adjunctionhomotopycategory}
Let $F\lcolon \cN \lrl \cM \rcolon L,R $ be a string of adjoint functors and $\cM$ a model category such that $F L\lcolon\cM\rightleftarrows\cM\rcolon F R$ is a Quillen pair.
We endow $\cN$ with the class of weak equivalences created by $F$, and denote by $\ho\cN$ the corresponding homotopy category (which is potentially not locally small). 
In this situation, one can then show that $F\colon\cN\to\cM$ is homotopical, while $L\colon\cM\to\cN$ (resp.~$R\colon\cM\to\cN$) preserves weak equivalences between cofibrant (resp.~fibrant) objects, by Ken Brown's lemma \cite[Corollary 7.7.2(1)]{Hirschhorn:MCL}. 
In particular, the adjunctions $L\lcolon\cM\rightleftarrows\cN\rcolon F$ and $F\lcolon\cN\rightleftarrows\cM\rcolon R$ are \emph{deformable adjunctions}, and by \cite[Theorem~2.2.11]{RiehlCHT} they induce adjunctions $\overline L\lcolon\ho\cM\rightleftarrows\ho\cN\rcolon\overline F$ and $\overline F\lcolon\ho\cN\rightleftarrows\ho\cM\rcolon\overline R$ at the level of homotopy categories, where the values of the functors on objects are $\overline LA = L(A^c)$, $\overline RA=R(A^f)$ and $\overline FX = FX$.
There is a string of adjoint functors at the level of homotopy categories $\overline F\lcolon \ho\cN \lrl \ho\cM \rcolon \overline L,\overline R$.
\end{rmk}

\begin{prop}
\label{Quillenidempotent2}
Let $F\lcolon \cN \lrl \cM \rcolon L,R $ be a string of adjoint functors, and $\cM$ a model category such that $F L\lcolon\cM\rightleftarrows\cM\rcolon F R$ is a Quillen pair. 
When $\cN$ is endowed with the class of weak equivalences created by $F$, the following are equivalent.
\begin{enumerate}[leftmargin=*]
\item The functor $\overline F \colon \ho\cN \to \ho\cM$ is fully faithful.
\item The string $\overline F \lcolon \ho\cN \lrl \ho\cM \rcolon \overline L,\overline R $ is an idempotent adjoint string.\label{item idempotent at homotopy level}
\item The string $F\lcolon \cN \lrl \cM \rcolon L,R $ is a homotopy idempotent adjoint string.\label{item homotopy idempotent}
\end{enumerate}
\end{prop}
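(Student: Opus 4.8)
The plan is to prove the cyclic chain of implications $(1) \Rightarrow (2) \Rightarrow (3) \Rightarrow (1)$. The single computation on which everything rests is the identification of the derived counit of the adjunction $\overline L \dashv \overline F$ with the natural transformation $\epsilon^h$ of \cref{def homotopy idempotent}. First I would check that, with the derived adjunctions built as in \cref{adjunctionhomotopycategory} via \cite[Theorem~2.2.11]{RiehlCHT}, the component $\overline\epsilon_X\colon \overline L\,\overline F X \to X$ is the image in $\ho\cN$ of the composite $L((FX)^c) \to LFX \xrightarrow{\epsilon_X} X$. Since this composite is exactly $\epsilon^h_X$, one obtains $\overline\epsilon_X = [\epsilon^h_X]$ in $\ho\cN$; and because $\overline F$ commutes with the two localization functors, applying it gives $\overline F\overline\epsilon_X = [F\epsilon^h_X]$ in $\ho\cM$.

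With these identifications I would invoke two elementary facts. Since $\cM$ is a model category, a morphism of $\cM$ is a weak equivalence if and only if it becomes an isomorphism in $\ho\cM$; and since the weak equivalences of $\cN$ are created by $F$, a morphism of $\cN$ is a weak equivalence if and only if its $F$-image is. The first fact, together with $\overline F\overline\epsilon_X = [F\epsilon^h_X]$, shows that $F\epsilon^h_X$ is a weak equivalence if and only if $\overline F\overline\epsilon_X$ is an isomorphism. The second fact, together with $\overline\epsilon_X = [\epsilon^h_X]$ and the observation that weak equivalences become isomorphisms after localizing, shows that whenever $F\epsilon^h_X$ is a weak equivalence the map $\overline\epsilon_X$ is an isomorphism.

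The three implications are then short. For $(1)\Rightarrow(2)$: the homotopy-level string $\overline F\lcolon \ho\cN \lrl \ho\cM \rcolon \overline L,\overline R$ is a genuine adjoint string by \cref{adjunctionhomotopycategory}, so if $\overline F$ is fully faithful the observation following \cref{characterizationidempotent}---a fully faithful middle functor forces $\overline F\overline\epsilon$ to be invertible---shows it is idempotent. For $(2)\Rightarrow(3)$: idempotency makes every outside edge of the diamond of \cref{characterizationidempotent} an isomorphism, in particular $\overline F\overline\epsilon$; by the first fact each $F\epsilon^h_X$ is then a weak equivalence, which is precisely homotopy idempotency. For $(3)\Rightarrow(1)$: homotopy idempotency and the second fact make each $\overline\epsilon_X$ an isomorphism, so the counit $\overline\epsilon$ of $\overline L \dashv \overline F$ is a natural isomorphism, whence its right adjoint $\overline F$ is fully faithful by \cite[Theorem~IV.3.1]{MacLane}.

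The hard part will be the opening identification $\overline\epsilon_X = [\epsilon^h_X]$, which requires unwinding the derived counit of a merely \emph{deformable} (rather than Quillen) adjunction and confirming that the cofibrant deformation used to build $\overline L$, together with the fact that $F$ is homotopical (so that $\overline F$ needs no fibrant deformation), reproduces exactly the composite defining $\epsilon^h_X$. Everything afterward is formal. It is worth noting that routing the last step through the counit $\overline\epsilon$ itself, rather than through $\overline F\overline\epsilon$, lets full faithfulness fall out directly from \cite[Theorem~IV.3.1]{MacLane}; in particular we never need $\overline F$ to be conservative, nor do we need $\ho\cN$ to underlie a model structure.
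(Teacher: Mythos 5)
Your proposal is correct and follows essentially the same route as the paper's proof: both rest on identifying the derived counit $\overline\epsilon_X$ of $\overline L \dashv \overline F$ with the class of $\epsilon^h_X$ (and $\overline F\overline\epsilon_X$ with that of $F\epsilon^h_X$), then combining the saturation of weak equivalences in the model category $\cM$, the fact that $F$ creates the weak equivalences of $\cN$, and the criterion of \cref{characterizationidempotent} together with \cite[Theorem~IV.3.1]{MacLane}. The paper leaves the final deductions implicit where you arrange them into the cycle $(1)\Rightarrow(2)\Rightarrow(3)\Rightarrow(1)$, but the mathematical content is the same.
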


\begin{proof}
As observed in \cref{adjunctionhomotopycategory}, the adjunction $L\lcolon\cM\rightleftarrows \cN\rcolon F$ induces an adjunction $\overline{L}\lcolon\ho\cM\rightleftarrows \ho\cN\rcolon \overline{F}$ at the level of homotopy categories.
The natural bijections coming from the total derived adjunction $\overline L \dashv \overline F$ and the adjunction $L\dashv F$ fit into the square
\[
\begin{tikzcd}[row sep=small]
\cN(LA,X) \dar \rar[leftrightarrow,"\cong"] & \cM(A,FX) \dar \\
\ho\cN(LA,X) \dar & \ho\cM(A,FX) \dar{=} \\
\ho\cN(\overline LA,X) \rar[leftrightarrow,"\cong"] & \ho\cM(A,\overline FX).
\end{tikzcd}
\]

One sees that the counit $\overline \epsilon \colon \overline L \phantom{.}\overline F \Rightarrow \id_{\ho \cN}$ of the adjunction $\overline{L}\dashv\overline{F}$ is represented at an object $X$ by the map $\epsilon_X^h \colon L (FX)^c \to LFX \to X$ in $\cN$, obtained by applying $L$ to the cofibrant replacement map of $FX$ and by composing with $\epsilon_X$, the counit of the adjunction $L\dashv F$.
As $F$ is homotopical, the natural transformation $\overline F \overline \epsilon \colon \overline F \phantom{.}\overline L\phantom{.} \overline F \Rightarrow \overline F$ at $X$ is then represented by $F\epsilon_X^h\colon FL (FX)^c \to FLFX \to FX$ in $\cM$. 
Since $F$ creates weak equivalences, $\epsilon_X^h$ is weak equivalence if and only if $F\epsilon_X^h$ is. 
The equivalence now follows from \cref{characterizationidempotent} and \cref{def homotopy idempotent}.
\end{proof}

\begin{remark}\label{remark stay within Quillen world}
Suppose we are in the situation of Proposition~\ref{Quillenidempotent2}, and additionally assume that $\cM$ is cofibrantly generated and that $\cN$ is bicomplete.
One then has access to the right-induced model structure $\rims{\cN}$ from Theorem~\ref{existenceright}, so that the functor $F \colon \rims{\cN} \to \cM$ is both left and right Quillen.
Then the category $\ho\cN = \ho\rims{\cN}$ is defined as usual, and the induced adjoint string at the level at the level of homotopy categories uses that the left and right derived functors of $F$ coincide (see \cite[Corollary 7.8]{Shulman:CCLRDF}).
In that special case, this proposition may be proved by explicitly comparing the map from \cref{def homotopy idempotent} to the derived unit of the Quillen pair $L\dashv F$.
\end{remark}

\begin{remark}[Bousfield--Friedlander (co)localization]
Suppose $\cM$ and $\cN$ are model categories and $F \lcolon \cN \lrl \cM \rcolon L,R$ is a homotopy idempotent string where $F$ creates weak equivalences.
It would be interesting to investigate further conditions on the adjoint string which guarantee that the Bousfield--Friedlander localization $\cM^{FL}$ of $\cM$ at the $FL$-equivalences exists \cite{MR2427416}, since one can prove that $F\colon \cN \to \cM^{FL}$ is a right Quillen equivalence.
Dually, $F$ will give a left Quillen equivalence from $\cN$ to the colocalization of $\cM$ at the $FR$-equivalences \cite[\S3]{MR2277698}, provided it exists.
\end{remark}

\section{Cubical sets}

In this section and the next, we denote by $\sset_{(\infty,0)}$ the Kan--Quillen model structure on the category $\sset$ for $(\infty,0)$-categories (namely $\infty$-groupoids), and by $\sset_{(\infty,1)}$ the Joyal model structure on $\sset$ for $(\infty,1)$-categories (see e.g.~\cite{DuggerSpivakMapping,htt}). In these model structures, the cofibrations are precisely the monomorphisms and the fibrant objects are respectively the Kan complexes and the quasi-categories.
We uniformly denote these two model structures by $\sset_{(\infty,\varepsilon)}$, where $\varepsilon=0,1$.

The goal of the present section is to discuss several model structures on the category of cubical sets.
As mentioned in the introduction, there are many useful categories of cubical sets, depending on the choice of the underlying cube category; several of these are discussed in \cite{GrandisMauri,BuchholtzMorehouse}.
For example, one can use the minimal structure where only the face and degeneracy maps are present, or one could add in either positive or negative connections, or one can consider all poset maps between cubes.
These cube categories are Grothendieck test categories, so each is suitable for modeling $\infty$-groupoids (see  \cite{Jardine:CHT,cisinski,Maltsiniotis:CCCUCT,StreicherWeinberger}).
More recently, model structures for higher categories have been developed for cubical sets with connection: for $\infty$-categories this was done in \cite{DohertyKapulkinLindseySattler:CMI1C}, and using marked cubical sets a model for $(\infty,n)$-categories was given in \cite{CampionKapulkinMaehara:ACMINC}.
The methods of the last references break down when working with the full cube category (which is not even a generalized Reedy category in the sense of \cite{BergerMoerdijk:OENRC}), but below we will show that one can nevertheless obtain interesting results.
We compare one of our new model structures to a type-theoretical model structure from \cite{SattlerIdempotent,StreicherWeinberger} and another to the cubical Joyal model structure of \cite{DohertyKapulkinLindseySattler:CMI1C}.

\subsection{New model structures on cubical sets}\label{subsec cube}
Let $\square$ denote the full subcategory of $\cat$ of cubes $[1]^n$ for $n\ge0$,
and let $\cset=\set^{\square^{\op}}$ denote the category of \emph{cubical sets}, namely presehaves $X\colon\square^{\op}\to\set$. 
This category of cubes, and the corresponding  category of cubical sets, are those studied in \cite{SattlerIdempotent, KV} (see \cite{CCHM} for a closely related cube category).

There is a triangulation functor $T\colon\cset\to\sset$, which can be defined on representables by $T(\square[1]^n) \coloneqq \Delta[1]^n$, and extended cocontinuously to all cubical sets, and the functor $T$ has a right adjoint $C\colon\sset\to\cset$. 
Kapulkin--Voevodsky show in \cite[\textsection1]{KV} that the functor $C$ is fully faithful.

Sattler shows in \cite[Theorem 2.1]{SattlerIdempotent} that for this specific choice of category of cubical sets the functor $T$ also admits a left adjoint $L\colon\sset\to\cset$. In particular, there is a fully faithful adjoint string
\[T\lcolon\cset\lrl\sset\rcolon L,C\]
between the category of simplicial sets and this specific choice of category of cubical sets.

From \cref{corollary fully faithful string}, we obtain the following, which endows $\cset$ with two Quillen equivalent model structures for $(\infty,\varepsilon)$-categories for any fixed $\varepsilon=0,1$.

\begin{prop}\label{cor cubical QE}
Let $\varepsilon=0,1$.
The category $\cset$ admits both the left-induced model structure $\cset_{\ell,\varepsilon}$ and the right-induced model structure $\cset_{r,\varepsilon}$ along $T\colon\cset\to\sset_{(\infty,\varepsilon)}$. 
Further, the left diagram below is a diagram of left Quillen equivalences, and the right diagram below is a diagram of right Quillen equivalences.
\[ \begin{tikzcd}[column sep=tiny]
\cset_{r,\varepsilon}\ar[rr,"\id_{\cset}"] \ar[dr,"T"'] & & \cset_{\ell,\varepsilon} \ar[dl, "T"] 
&
\cset_{r,\varepsilon} \ar[dr,"T"'] & & \cset_{\ell,\varepsilon} \ar[dl, "T"]  \ar[ll,"\id_{\cset}"']
\\
& \sset_{(\infty,\varepsilon)} & & & \sset_{(\infty,\varepsilon)}
\end{tikzcd} \]
\end{prop}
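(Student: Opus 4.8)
The plan is to recognize this proposition as a direct instance of \cref{corollary fully faithful string}, applied to the adjoint string $T\lcolon\cset\lrl\sset\rcolon L,C$ recalled just above. All three conclusions we want --- existence of the left-induced structure $\cset_{\ell,\varepsilon}$, existence of the right-induced structure $\cset_{r,\varepsilon}$, and the two diagrams of left and right Quillen equivalences --- are exactly the output of that corollary, so the entire task reduces to checking its three hypotheses for our particular data.

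First I would verify that the string is fully faithful in the sense of \cref{characterizationfullyfaithful}. The right adjoint $C$ is fully faithful by the Kapulkin--Voevodsky result cited above, and \cref{characterizationfullyfaithful} then guarantees that the left adjoint $L$ is fully faithful as well; hence $T\lcolon\cset\lrl\sset\rcolon L,C$ is a fully faithful adjoint string. Next I would observe that $\cset=\set^{\square^{\op}}$ is a presheaf category on the small category $\square$ and is therefore locally presentable, which supplies the hypothesis on the domain category. Finally I would note that, for each $\varepsilon=0,1$, the model structure $\sset_{(\infty,\varepsilon)}$ --- the Kan--Quillen structure when $\varepsilon=0$ and the Joyal structure when $\varepsilon=1$ --- is combinatorial, being cofibrantly generated with a locally presentable underlying category.

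With these three hypotheses confirmed, I would simply invoke \cref{corollary fully faithful string} with $F=T$, $\cN=\cset$, and $\cM=\sset_{(\infty,\varepsilon)}$, reading off the existence of $\cset_{\ell,\varepsilon}$ and $\cset_{r,\varepsilon}$ together with the two triangular diagrams of Quillen equivalences. Because the statement is a specialization of an already-established corollary, there is no genuine obstacle internal to the proof; the only real inputs are the external facts that $C$ is fully faithful and that the two simplicial model structures are combinatorial. If anything requires care, it is noting that fully faithfulness of $C$ alone forces $L$ to be fully faithful via \cref{characterizationfullyfaithful}, so that the corollary's hypothesis on the adjoint string is met.
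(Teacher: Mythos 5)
Your proposal is correct and follows essentially the same route as the paper: both proofs reduce the statement to \cref{corollary fully faithful string}, using the Kapulkin--Voevodsky result that $C$ is fully faithful (hence, by \cref{characterizationfullyfaithful}, the whole string is fully faithful). Your additional explicit checks that $\cset$ is locally presentable and that $\sset_{(\infty,\varepsilon)}$ is combinatorial are left implicit in the paper but are exactly the right hypotheses to verify.
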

\begin{proof}
As mentioned above, $T$ admits both adjoints.
Further, as discussed above, the right adjoint $C$ was shown to be fully faithful by Kapulkin--Voevodsky in \cite[\S1]{KV}, so this is a fully faithful adjoint string.  
Corollary~\ref{corollary fully faithful string} implies the result.
\end{proof}

    The following lemma, which we learned from Sattler, clarifies the relation between the 
classes of cofibrations in the model structures $\cset_{r,\varepsilon}$ and $\cset_{\ell,\varepsilon}$.

    \begin{lem}\label{lemma monos cofib}
    The class of monomorphisms of $\cset$ contains the cofibrations of $\cset_{r,\varepsilon}$
    and is contained in the class of cofibrations of $\cset_{\ell,\varepsilon}$.
    \end{lem}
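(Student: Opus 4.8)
The plan is to sandwich the monomorphisms between the two cofibration classes using that $T$ is simultaneously left and right Quillen for both structures, combined with the two complementary ways $T$ interacts with monomorphisms. The running observation I would record first is that, since the cofibrations of $\sset_{(\infty,\varepsilon)}$ are exactly the monomorphisms and the left-induced cofibrations are created by $T$, a map $f$ of $\cset$ lies in the cofibrations of $\cset_{\ell,\varepsilon}$ if and only if $Tf$ is a monomorphism. Both inclusions can then be phrased as statements about how $T$ relates monomorphisms in $\cset$ and $\sset$.

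For the inclusion of the monomorphisms into the cofibrations of $\cset_{\ell,\varepsilon}$, the characterization above reduces everything to checking that $T$ \emph{preserves} monomorphisms. This is immediate and formal: since $L\dashv T$, the functor $T$ is a right adjoint, and a monomorphism is precisely a map $m$ for which the square with sides $m$ and two identities is a pullback; right adjoints preserve such pullbacks, so $Tf$ is a monomorphism whenever $f$ is. Hence every monomorphism $f$ satisfies "$Tf$ mono" and is therefore a cofibration of $\cset_{\ell,\varepsilon}$.

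For the inclusion of the cofibrations of $\cset_{r,\varepsilon}$ into the monomorphisms, I would first invoke \cref{existenceright}, which asserts that $T\colon\cset_{r,\varepsilon}\to\sset_{(\infty,\varepsilon)}$ is left Quillen, so it carries any cofibration $f$ of $\cset_{r,\varepsilon}$ to a cofibration of $\sset_{(\infty,\varepsilon)}$, i.e. to a monomorphism $Tf$. It then remains to upgrade "$Tf$ mono" to "$f$ mono", that is, to show that $T$ \emph{reflects} monomorphisms. I would reduce this to the claim that the unit $\rho_X\colon X\to CTX$ of the adjunction $T\dashv C$ is a monomorphism for every cubical set $X$ (equivalently, that $T$ is faithful): granting this, if $Tf$ is a monomorphism then so is $CTf$ (as $C$ is a right adjoint), and the naturality identity $\rho_Y\circ f = CTf\circ\rho_X$ exhibits $\rho_Y\circ f$ as a composite of monomorphisms, whence $f$ is a monomorphism by left cancellation.

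The main obstacle is exactly this reflection statement, and it is where formal adjunction arguments run out. Evaluating $\rho_X$ at a cube $\square[1]^n$ identifies it with the map $X_n\to\sset((\Delta[1])^n, TX)$ sending an $n$-cube $x$ to its triangulation $Tx$, so monicity of $\rho_X$ is precisely the assertion that distinct cubes of $X$ have distinct triangulations, i.e. that $T$ does not collapse cells. Verifying this requires the specific combinatorics of $\square$ and of the triangulation functor rather than abstract nonsense, and I expect it to be the hardest point; it is the geometric heart of the lemma. An equivalent packaging, which I would mention as an alternative, is to note that $\cset_{r,\varepsilon}$ is cofibrantly generated with generating cofibrations $L(\partial\Delta[n]\hookrightarrow\Delta[n])$, that monomorphisms form a saturated class in the presheaf topos $\cset$ (being stable under pushout, transfinite composition, and retract), and that it then suffices to know that $L$ sends these boundary inclusions to monomorphisms — the same combinatorial input viewed through $L$ rather than through the unit of $T\dashv C$.
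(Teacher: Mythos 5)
Your second inclusion (monomorphisms of $\cset$ are contained in the cofibrations of $\cset_{\ell,\varepsilon}$) is complete and is exactly the paper's argument: $T$ is a right adjoint, hence preserves monomorphisms, and since the cofibrations of $\cset_{\ell,\varepsilon}$ are created by $T$ and the cofibrations of $\sset_{(\infty,\varepsilon)}$ are precisely the monomorphisms, the claim follows.

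The first inclusion, however, has a genuine gap, and you have correctly located it yourself: everything hinges on a non-formal fact about this specific adjoint string $L \dashv T \dashv C$, which you name but do not prove. Your primary route reduces the claim to ``$T$ reflects monomorphisms,'' i.e.\ to the unit of $T \dashv C$ being pointwise monic; the reduction (compose with the unit, use that $C$ preserves monomorphisms, cancel on the left) is formally sound, but the monicity of the unit is itself nontrivial and is not the fact available in the literature. What is available --- and what the paper uses --- is precisely your ``alternative packaging'': the generating cofibrations of $\cset_{r,\varepsilon}$ are obtained by applying $L$ to a set of generating cofibrations of $\sset_{(\infty,\varepsilon)}$ (this is part of the transfer machinery behind \cref{existenceright}); Sattler proves that this particular $L$ preserves monomorphisms \cite[Proposition~3.3]{SattlerIdempotent}; and since the monomorphisms of the presheaf category $\cset$ form a saturated class, every cofibration of $\cset_{r,\varepsilon}$, being a retract of a transfinite composite of pushouts of generating ones, is a monomorphism. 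So the missing ingredient is a citation rather than a new argument: replace ``the same combinatorial input viewed through $L$'' by Sattler's result and your alternative route becomes the paper's proof. Note also that this route is preferable to your primary one: it needs only that $L$ sends certain monomorphisms to monomorphisms, whereas faithfulness of $T$ is a genuinely different statement that neither follows formally from the adjoint string (fully faithfulness of $L$ and $C$ gives $TL \cong \id$, not faithfulness of $T$) nor is established by the results quoted in the paper.
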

    
    \begin{proof}
    The generating cofibrations of $\cset_{r,\varepsilon}$ are given by applying $L$ to a set of generating cofibrations for $\sset_{(\infty,\epsilon)}$, and 
    Sattler shows in \cite[Proposition~3.3]{SattlerIdempotent} that the functor $L$ preserves monomorphisms. Since $\cset$ is a presheaf category, the class of monomorphisms of $\cset$ is saturated. It follows that the class of cofibrations of $\cset_{r,\varepsilon}$ is contained in the class of monomorphisms of $\cset$.
    
    Further, being a right adjoint the functor $T$ preserves monomorphisms.
It follows, using the description of the cofibrations in $\cset_{\ell,\varepsilon}$, that the class of monomorphisms of cubical sets is contained in the class of cofibrations of $\cset_{\ell,\varepsilon}$.
\end{proof}

\begin{prop}
    \label{corcsets}
Let $\varepsilon=0,1$. 
There is a model structure on cubical sets, denoted by $\cset_{m,\varepsilon}$, in which the cofibrations are the monomorphisms and the weak equivalences are created by $T\colon\cset\to\sset_{(\infty,\varepsilon)}$. Further, the left diagram below is a diagram of left Quillen equivalences, and the right diagram below is a diagram of right Quillen equivalences.
\[ \begin{tikzcd}[column sep=small]
\cset_{r,\varepsilon}\rar["\id"] \ar[dr,"T"'] & \cset_{m,\varepsilon} \dar["T"] \rar["\id"] & \cset_{\ell,\varepsilon} \ar[dl, "T"]
\\
& \sset_{(\infty,\varepsilon)} & 
\end{tikzcd}
\quad
\begin{tikzcd}[column sep=small]
\cset_{r,\varepsilon} \ar[dr,"T"'] &\lar["\id",swap] \cset_{m,\varepsilon} \dar["T"]  & \lar["\id",swap] \cset_{\ell,\varepsilon} \ar[dl, "T"]
\\
& \sset_{(\infty,\varepsilon)} & 
\end{tikzcd}
\]
\end{prop}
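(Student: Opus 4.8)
The plan is to realize $\cset_{m,\varepsilon}$ as a combinatorial model structure via Jeff Smith's recognition theorem (see \cite[\textsection A.2.6]{htt}), taking as weak equivalences the class $\mathcal{W}$ created by $T$ --- which is exactly the common class of weak equivalences of $\cset_{r,\varepsilon}$ and $\cset_{\ell,\varepsilon}$ from \cref{cor cubical QE} --- and as cofibrations the monomorphisms. Since $\cset$ is a presheaf category, the monomorphisms form the left class of a cofibrantly generated weak factorization system (see \cite{cisinski}), so I may fix a set $I$ of monomorphisms with $\mathrm{cof}(I)$ equal to the class of all monomorphisms. The class $\mathcal{W}$ is the class of weak equivalences of the combinatorial model structure $\cset_{r,\varepsilon}$, hence it is accessible and satisfies two-out-of-three and closure under retracts; thus the only hypotheses of the recognition theorem left to check are the two acyclicity conditions: that every map with the right lifting property against $I$ lies in $\mathcal{W}$, and that $\mathcal{W}\cap(\text{monomorphisms})$ is closed under pushout and transfinite composition.

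Both conditions follow from the two-sided sandwiching $\mathrm{cof}(\cset_{r,\varepsilon})\subseteq(\text{monomorphisms})\subseteq\mathrm{cof}(\cset_{\ell,\varepsilon})$ established in \cref{lemma monos cofib}, and this is the crux of the argument. For the first condition, a map with the right lifting property against all monomorphisms in particular has the right lifting property against the cofibrations of $\cset_{r,\varepsilon}$, so it is an acyclic fibration of $\cset_{r,\varepsilon}$ and therefore lies in $\mathcal{W}$. For the second condition, any monomorphism lying in $\mathcal{W}$ is, by the right-hand inclusion, an acyclic cofibration of $\cset_{\ell,\varepsilon}$; since acyclic cofibrations in a model category are closed under pushout and transfinite composition, while monomorphisms are stable under the same operations in a topos, every pushout or transfinite composite of maps in $\mathcal{W}\cap(\text{monomorphisms})$ is again both a monomorphism and (being an acyclic cofibration of $\cset_{\ell,\varepsilon}$) a weak equivalence. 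This produces the model structure $\cset_{m,\varepsilon}$, whose cofibrations are the monomorphisms and whose weak equivalences are created by $T$.

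It remains to identify the Quillen equivalences, and here the work is formal. I would first record that the three model structures $\cset_{r,\varepsilon}$, $\cset_{m,\varepsilon}$, $\cset_{\ell,\varepsilon}$ share the class $\mathcal{W}$ of weak equivalences, and that the inclusions of \cref{lemma monos cofib} give the corresponding inclusions of (acyclic) cofibration classes, making $\id\colon\cset_{r,\varepsilon}\to\cset_{m,\varepsilon}$ and $\id\colon\cset_{m,\varepsilon}\to\cset_{\ell,\varepsilon}$ left Quillen; exactly as in the proof of \cref{prop comparison of left and right}, since the weak equivalences agree these identity adjunctions are automatically Quillen equivalences (and dually for the right Quillen directions appearing in the right-hand diagram). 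Finally, the left Quillen functor $T\colon\cset_{m,\varepsilon}\to\sset_{(\infty,\varepsilon)}$ factors as $T\circ\id$ through $\cset_{\ell,\varepsilon}$, a composite of left Quillen equivalences by \cref{cor cubical QE}, while the right Quillen functor $T\colon\cset_{m,\varepsilon}\to\sset_{(\infty,\varepsilon)}$ factors as $T\circ\id$ through $\cset_{r,\varepsilon}$, a composite of right Quillen equivalences by \cref{cor cubical QE}; two-out-of-three for Quillen equivalences \cite[Corollary 1.3.15]{Hovey:MC} then shows $T$ is both a left and a right Quillen equivalence, yielding both diagrams. The main obstacle is the existence half: everything hinges on the two-sided comparison of cofibration classes in \cref{lemma monos cofib}, which is precisely what lets the first acyclicity condition be imported from $\cset_{r,\varepsilon}$ and the second from $\cset_{\ell,\varepsilon}$.
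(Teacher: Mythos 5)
Your proof is correct, but it takes a genuinely different route from the paper. The paper's proof is essentially a one-line citation: it invokes Jardine's intermediate model structure theorem \cite{JardineIntermediate} (in the formulation of \cite[Remark 2.3.4]{HKRS}), which says that whenever two model structures $\cN_1,\cN_2$ on the same category share their weak equivalences and a class $\cK$ is sandwiched between their cofibration classes, there is a model structure with weak equivalences as before and cofibrations $\cK$; \cref{lemma monos cofib} supplies exactly the required sandwich, with $\cN_1 = \cset_{r,\varepsilon}$, $\cN_2 = \cset_{\ell,\varepsilon}$ and $\cK$ the monomorphisms. What you have done instead is, in effect, to reprove the locally presentable instance of Jardine's theorem from scratch via Smith's recognition theorem: the sandwich gives $\mathrm{inj}(\text{monos}) \subseteq \mathrm{inj}(\mathrm{cof}(\cset_{r,\varepsilon})) \subseteq \mathcal{W}$ on one side, and $(\text{monos}) \cap \mathcal{W} \subseteq (\text{acyclic cofibrations of } \cset_{\ell,\varepsilon})$ on the other, yielding the two acyclicity conditions. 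Both arguments pivot on the same key input, \cref{lemma monos cofib}, so the essential mathematical content agrees; the difference is in what machinery absorbs the rest. The paper's route is shorter but leans on Jardine's theorem as a black box; yours is self-contained modulo standard combinatorial model category technology, and it has the side benefit of explicitly exhibiting $\cset_{m,\varepsilon}$ as combinatorial, at the cost of needing two additional (standard, but nontrivial) inputs: that the monomorphisms of a presheaf category are cofibrantly generated by a set, and that the weak equivalences of the combinatorial model structure $\cset_{r,\varepsilon}$ form an accessible class. Your treatment of the Quillen equivalence diagrams (identity adjunctions are Quillen equivalences because all three structures share $\mathcal{W}$, then two-out-of-three for Quillen equivalences \cite[Corollary 1.3.15]{Hovey:MC} applied to the factorizations of $T$ through $\cset_{\ell,\varepsilon}$ and $\cset_{r,\varepsilon}$) is also correct, and in fact more explicit than the paper, which leaves this part implicit.
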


\begin{proof}
By a theorem of Jardine \cite{JardineIntermediate} (following the interpretation from \cite[Remark 2.3.4]{HKRS}), we know that if $\cN$ is a category endowed with two model structures $\cN_1$ and $\cN_2$ having the same class of weak equivalences, and if $\cK$ is a class of maps which contains the cofibrations of $\cN_1$ and is contained in the cofibrations of $\cN_2$, then $\cN$ supports a model structure in which the class of weak equivalences is the same as for $\cN_1$ and $\cN_2$, and the class of cofibrations is $\cK$.
By \cref{lemma monos cofib}, we can apply this when $\cK$ is the class of monomorphisms of $\cset$ to obtain the indicated model structures.
\end{proof}

The model structure $\cset_{m,0}$ should be related to the \emph{test model structure} (see \cite[Remark 1.2]{KV}) which has the same cofibrations and also has weak equivalences created in $\infty$-groupoids.
The model structures $\cset_{r,\varepsilon}$ for $\varepsilon=0,1$ were known to Sattler, while the model structures $\cset_{\ell,\varepsilon}$ for $\varepsilon=0,1$, as well as the model structure $\cset_{m,1}$ are new.

\subsection{A comparison with cubical models of homotopy type theory}

The other model structure on the category $\cset$ considered in the literature, motivated by homotopy type theory, is the minimal Cisinski model structure on $\cset$ from \cite[\textsection3]{StreicherWeinberger}, which was also studied in \cite[\textsection3.3]{SattlerIdempotent}.
    
\begin{thm}
There is a model structure on $c\set$, which we denote by $\cset_\subsattler$, in which the cofibrations are the monomorphisms, and the fibrations are the maps that have the right lifting property with respect to the class of maps
\[(\square[0]\times B)\amalg_{\square[0]\times A}(\square[1]\times A)\to\square[1]\times B\]
where $A\to B$ is a monomorphism of cubical sets and $\square[0]\to\square[1]$ is either of the two canonical inclusions.
Further, the functor $T\colon\cset_\subsattler\to\sset_{(\infty,0)}$ is left and right Quillen.
\end{thm}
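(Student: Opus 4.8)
The plan is to identify $\cset_\subsattler$ with the model structure $\cset_{m,0}$ of \cref{corcsets}, after which both Quillen properties of $T$ are immediate. The existence of $\cset_\subsattler$ --- with the monomorphisms as cofibrations and with fibrations those maps having the right lifting property against the displayed generating anodyne maps --- is the minimal Cisinski model structure, and I would take it as given from \cite{StreicherWeinberger,SattlerIdempotent}. In particular $\cset_\subsattler$ and $\cset_{m,0}$ share the same class of cofibrations, namely the monomorphisms of $\cset$. Since a model structure on a fixed bicomplete category is determined by its cofibrations together with its weak equivalences, it then suffices to prove that the two structures have the same weak equivalences, that is, that a map $f$ of $\cset$ is a weak equivalence of $\cset_\subsattler$ precisely when $Tf$ is a weak homotopy equivalence.

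I would first record that $T\colon\cset_\subsattler\to\sset_{(\infty,0)}$ is left Quillen (with respect to $T\dashv C$). Being a right adjoint to $L$, the functor $T$ preserves monomorphisms and hence cofibrations. For the acyclic cofibrations it is enough to treat the generating anodyne maps, namely the pushout-products of an endpoint inclusion $\square[0]\to\square[1]$ with a monomorphism $A\to B$. As a left adjoint to $C$ the functor $T$ preserves the defining pushout, and $T$ preserves finite products, so it carries such a map to the pushout-product of the endpoint inclusion $\Delta[0]\to\Delta[1]$ with the monomorphism $TA\to TB$; this is a Kan anodyne map, in particular an acyclic cofibration of $\sset_{(\infty,0)}$. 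As the acyclic cofibrations of $\cset_\subsattler$ are the saturation of these generators, $T$ preserves all of them, and is therefore left Quillen.

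The substantive input is that $T$ is in fact a left Quillen \emph{equivalence}, which is the comparison established in \cite{SattlerIdempotent} (see also \cite{StreicherWeinberger}). Since every object of $\cset_\subsattler$ is cofibrant, this equivalence lets me read off the weak equivalences: a map $f$ is a weak equivalence of $\cset_\subsattler$ if and only if its image in $\ho\cset_\subsattler$ is an isomorphism (model categories being saturated), if and only if --- via the induced equivalence $\ho\cset_\subsattler\simeq\ho\sset_{(\infty,0)}$, under which $f$ maps to $Tf$ as all objects are cofibrant --- the map $Tf$ is an isomorphism in $\ho\sset_{(\infty,0)}$, that is, a weak homotopy equivalence. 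This is exactly the class of weak equivalences defining $\cset_{m,0}$, so the two model structures coincide, and \cref{corcsets} then shows that $T\colon\cset_\subsattler=\cset_{m,0}\to\sset_{(\infty,0)}$ is both left and right Quillen. I expect the only genuine obstacle to be this appeal to $T$ being a Quillen equivalence: the implication that every Cisinski weak equivalence is a $T$-weak-equivalence is formal, following from $T$ being left Quillen and every object being cofibrant via Ken Brown's lemma, whereas the reverse implication --- that $T$ reflects weak equivalences --- needs real control of the fibrant objects of $\cset_\subsattler$ and is where one must rely on the external comparison rather than on the abstract machinery of \cref{existenceleft} and \cref{corollary fully faithful string}.
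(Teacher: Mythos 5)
Your argument has a fatal gap at the step you yourself identify as ``the substantive input'': the claim that $T\colon\cset_\subsattler\to\sset_{(\infty,0)}$ is a left Quillen \emph{equivalence}, attributed to \cite{SattlerIdempotent}. No such result exists. As the paper records (citing \cite{Sattler:DCMTTMHT} and \cite[\S 5]{StreicherWeinberger}), it is an \emph{open problem} whether $\cset_\subsattler$ is Quillen equivalent to $\sset_{(\infty,0)}$ at all; this category of cubes (all poset maps between the $[1]^n$) is precisely the case where the equivalence proofs available for other cube categories break down. Moreover, since $\cset_\subsattler$ and $\cset_{m,0}$ have the same cofibrations, your conclusion that the two model structures coincide is \emph{equivalent} to that open problem: this is exactly the content of \cref{remark quillen pair hott} and the footnote following it, which establish only the one-directional statement that $\id\colon\cset_\subsattler\to\cset_{m,0}$ is left Quillen, and note that upgrading this to an equality of model structures is the same as the open question about $T$. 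So your proof, if it worked, would resolve an open problem by citation; the downstream reasoning (reading off weak equivalences via cofibrancy of all objects) is formally fine but rests entirely on this nonexistent input.

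The statement you were asked to prove does not require any comparison with $\cset_{m,0}$, and indeed cannot be proved that way: $\cset_{m,0}$ is new in this paper and is logically downstream (\cref{corcsets} depends on \cref{cor cubical QE}). In the paper the theorem is a recollection from the literature: the existence of $\cset_\subsattler$ is an instance of Cisinski's theory of model structures on presheaf categories, carried out for this cube category in \cite[\S 3]{StreicherWeinberger} and \cite[\S 3.3]{SattlerIdempotent}, and the assertion that $T$ is both left and right Quillen is likewise quoted from those sources. Your direct verification of the left Quillen half is correct and mirrors the pushout-product argument the paper uses in \cref{remark quillen pair hott}: $T$ preserves monomorphisms because it is a right adjoint, and it sends the generating anodyne maps to pushout-products of a monomorphism with $\Delta[0]\to\Delta[1]$, which are acyclic cofibrations in $\sset_{(\infty,0)}$. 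For the right Quillen half, the honest route is again the cited one: the left adjoint $L$ of $T$ preserves monomorphisms by \cite[Proposition 3.3]{SattlerIdempotent}, and one checks that $L$ carries the generating acyclic cofibrations of $\sset_{(\infty,0)}$ to anodyne maps of $\cset_\subsattler$; deriving it instead from an identification with $\cset_{m,0}$ is circular with respect to the open problem.
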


It is mentioned in \cite{Sattler:DCMTTMHT} and in \cite[\textsection5]{StreicherWeinberger}  that, unlike for other categories of cubical sets, it is an open problem whether the homotopy theoretic model structure $\cset_\subsattler$ is Quillen equivalent to the model structure $\sset_{(\infty,0)}$ for $\infty$-groupoids.
We now explore how the model structure $\cset_\subsattler$ compares with $\cset_{m,0}$.

\begin{prop}
\label{remark quillen pair hott}
The identity is a left Quillen functor $\cset_\subsattler \to \cset_{m,0}$.
\end{prop}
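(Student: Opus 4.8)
The plan is to verify the equivalent statement that the right adjoint, the identity functor $\cset_{m,0} \to \cset_\subsattler$, is a right Quillen functor. Since both model structures have the monomorphisms as their cofibrations, they have the same acyclic fibrations (the maps with the right lifting property against all cofibrations), so the identity trivially preserves acyclic fibrations and the only thing to check is that it sends $\cset_{m,0}$-fibrations to $\cset_\subsattler$-fibrations. By the description of $\cset_\subsattler$ above, a map is a $\cset_\subsattler$-fibration exactly when it lifts against every map
\[
(\square[0]\times B)\amalg_{\square[0]\times A}(\square[1]\times A)\to\square[1]\times B,
\]
so, since a $\cset_{m,0}$-fibration already lifts against all acyclic cofibrations of $\cset_{m,0}$, it suffices to prove that each such map is an acyclic cofibration of $\cset_{m,0}$ --- that is, a monomorphism that $T$ sends to a weak equivalence of $\sset_{(\infty,0)}$, recalling from \cref{corcsets} that the weak equivalences of $\cset_{m,0}$ are created by $T$.

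The heart of the argument is a computation. First I would record that $T$ preserves finite products: the Yoneda embedding preserves the products $[1]^m\times[1]^n\cong[1]^{m+n}$ of $\square$, so $\square[1]^m\times\square[1]^n\cong\square[1]^{m+n}$ and hence $T(\square[1]^m\times\square[1]^n)\cong\Delta[1]^m\times\Delta[1]^n$; the general case follows because $T$, each functor $-\times Y$, and $\times$ itself all commute with the colimits presenting arbitrary (cubical or simplicial) sets in terms of representables. As $T$ is also cocontinuous, being left adjoint to $C$, applying $T$ to the displayed map produces precisely the pushout--product
\[
(\Delta[0]\times TB)\amalg_{\Delta[0]\times TA}(\Delta[1]\times TA)\to\Delta[1]\times TB
\]
of the endpoint inclusion $\Delta[0]\hookrightarrow\Delta[1]$ with the map $TA\to TB$. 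Being a right adjoint, $T$ preserves monomorphisms (as in \cref{lemma monos cofib}), so $TA\to TB$ is a cofibration of $\sset_{(\infty,0)}$, while $\Delta[0]\hookrightarrow\Delta[1]$ is an acyclic cofibration of the Kan--Quillen model structure. Because $\sset_{(\infty,0)}$ is a cartesian monoidal model category, the pushout--product axiom shows the map above is an acyclic cofibration, in particular a weak equivalence. The original map is evidently a monomorphism, hence an acyclic cofibration of $\cset_{m,0}$, which is what was needed.

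The main obstacle is establishing that $T$ preserves the relevant finite products and thereby identifying $T$ of the generating map with an honest pushout--product in $\sset$; once this is secured the conclusion is immediate from the pushout--product axiom. It is worth emphasizing that the hypothesis $\varepsilon=0$ is essential: in the Joyal model structure $\sset_{(\infty,1)}$ the endpoint inclusion $\Delta[0]\hookrightarrow\Delta[1]$ is \emph{not} a weak equivalence, so this argument does not yield a left Quillen functor with target $\cset_{m,1}$.
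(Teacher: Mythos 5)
Your proof is correct and takes essentially the same route as the paper's: reduce to showing that $T$ sends the generating pushout-product maps to weak equivalences of $\sset_{(\infty,0)}$, identify $T$ of each such map with the corresponding pushout-product in $\sset$, and conclude from the pushout-product axiom for the cartesian model structure $\sset_{(\infty,0)}$. The only cosmetic differences are that the paper deduces preservation of products immediately from $T$ being a right adjoint (it has the left adjoint $L$) rather than by your hands-on colimit argument over representables, and it leaves implicit the dualization to fibrations that you spell out.
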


\begin{proof}
The two model structures have the same class of cofibrations, so it suffices to show that pushout-products 
\[(\square[0]\times B)\amalg_{\square[0]\times A}(\square[1]\times A)\to\square[1]\times B\]
of monomorphisms $A\hookrightarrow B$ and inclusions $\square[0]\hookrightarrow\square[1]$ of $\cset_\subsattler$ are weak equivalences in $\cset_{m,0}$, namely are sent by $T$ to weak equivalences of $\sset_{(\infty,0)}$. 
Since the functor $T$ is both a left and a right adjoint, it preserves pushouts, products and monomorphisms, and therefore it sends this map to the pushout product in $\sset$
\[(\Delta[0]\times TB)\amalg_{\Delta[0]\times TA}(\Delta[1]\times TA)\to\Delta[1]\times TB\]
of the monomorphism $TA\hookrightarrow TB$ (which is a cofibration of $\sset_{(\varepsilon,0)}$) and the inclusion $\Delta[0]\hookrightarrow\Delta[1]$ (which is an acyclic cofibration of $\sset_{(\varepsilon,0)}$). Since $\sset_{(\infty,0)}$ is a cartesian closed model category, the desired map is a weak equivalence of $\sset_{(\infty,0)}$.
\end{proof}

However, understanding whether the identity functor from \cref{remark quillen pair hott} is a left Quillen equivalence\footnote{As these model structures share the same class of cofibrations, this is a left Quillen equivalence if and only if the two model structures are equal.} is a non-trivial matter. Indeed, it is equivalent to the functor $T \colon \cset_\subsattler \to \sset_{(\infty,0)}$ being a left Quillen equivalence, which as mentioned earlier is an open question.

\subsection{A comparison between cubical models for \texorpdfstring{$(\infty,1)$}{(∞,1)}-categories}

In \cite{DohertyKapulkinLindseySattler:CMI1C}, a different model structure was constructed on cubical sets which also serves as a model for $(\infty,1)$-categories.
They use smaller cube categories than we have used above, and they give a much more explicit description of their model structure than we have given; this process is helped along by the fact that their cube categories are EZ-Reedy categories in the sense of \cite{elegant}.

For concreteness, write $\square'$ for the wide subcategory of $\square$ which is generated by faces, degeneracies, and both positive and negative connections \cite[1.2]{DohertyKapulkinLindseySattler:CMI1C}, and let $k\colon \square' \to \square$ be the inclusion functor.
The main theorem of \cite{DohertyKapulkinLindseySattler:CMI1C} is that there is a Quillen model structure $\cset'_\subdkls$, dubbed the \emph{cubical Joyal model structure}, on $\cset' \coloneqq \set^{(\square')^\op}$ so that the triangulation functor $T' \colon \cset'_\subdkls \to \sset_{(\infty,1)}$ is a left Quillen equivalence.

\begin{theorem}\label{theorem comparison with dkls}
The restriction functor $k^* \colon \cset_{\ell,1} \to \cset'_\subdkls$ is a right Quillen equivalence.
\end{theorem}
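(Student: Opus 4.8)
The plan is to factor the cubical Joyal triangulation through $k_!$ and then invoke two-out-of-three for Quillen equivalences. The inclusion $k\colon\square'\to\square$ induces, via \cref{remark left right kan extension adjoint triple}, an adjoint string $k^*\lcolon\cset\lrl\cset'\rcolon k_!,k_*$, so that $k^*$ is the right adjoint of the left Kan extension $k_!$. Since a right Quillen equivalence is by definition the right adjoint in a Quillen equivalence, it is enough to prove that $k_!\colon\cset'_\subdkls\to\cset_{\ell,1}$ is a \emph{left} Quillen equivalence.

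The crucial step is to identify $Tk_!\cong T'$. Both $Tk_!$ and $T'$ are cocontinuous functors $\cset'\to\sset$: the former because $k_!$ and $T$ are left adjoints, the latter because $T'$ is a left adjoint (with right adjoint the appropriate cubical nerve). On the representable $\square'[1]^n$ the functor $k_!$ returns the representable $\square[1]^n$, since left Kan extension along an inclusion carries representables to representables. Hence $Tk_!(\square'[1]^n)=T(\square[1]^n)=\Delta[1]^n=T'(\square'[1]^n)$, naturally in $n$. As every cubical set is a canonical colimit of representables, the two cocontinuous functors agree up to natural isomorphism.

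Next I would verify that $k_!$ is left Quillen. Because $\cset_{\ell,1}$ is left-induced along $T$, its cofibrations and its weak equivalences --- hence also its acyclic cofibrations --- are precisely the maps sent by $T$ to the corresponding class in $\sset_{(\infty,1)}$. Given a cofibration (respectively an acyclic cofibration) $f$ of $\cset'_\subdkls$, the natural isomorphism $T'\cong Tk_!$ identifies $Tk_!f$ with $T'f$, which is a cofibration (respectively an acyclic cofibration) of $\sset_{(\infty,1)}$ since $T'$ is left Quillen; by the creation property, $k_!f$ is then a cofibration (respectively an acyclic cofibration) of $\cset_{\ell,1}$. Thus $k_!$ preserves both classes and is left Quillen.

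Finally I would run two-out-of-three. By \cref{cor cubical QE} the functor $T\colon\cset_{\ell,1}\to\sset_{(\infty,1)}$ is a left Quillen equivalence, and by the main theorem of \cite{DohertyKapulkinLindseySattler:CMI1C} so is $T'\colon\cset'_\subdkls\to\sset_{(\infty,1)}$. Since $T'\cong Tk_!$ with $k_!$ left Quillen, two-out-of-three for Quillen equivalences \cite[Corollary 1.3.15]{Hovey:MC} shows that $k_!$ is a left Quillen equivalence, equivalently that $k^*$ is a right Quillen equivalence. The one genuinely nonformal point is the identification $Tk_!\cong T'$: all of the homotopical content is imported through the two cited left Quillen equivalences, and the argument hinges entirely on the triangulations being compatible along $k_!$. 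I therefore expect the step deserving the most care to be the verification that $k_!$ carries the cubical triangulation to the simplicial one, even though it ultimately reduces to the behaviour of $k_!$ on representables.
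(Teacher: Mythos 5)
Your proposal is correct and follows essentially the same route as the paper: pass to the left adjoint $k_!$, identify $Tk_!\cong T'$ via cocontinuity and agreement on representables, deduce that $k_!$ is left Quillen because $T'$ preserves and $T$ creates (hence reflects) cofibrations and weak equivalences, and conclude by two-out-of-three for Quillen equivalences. The only difference is expository: you spell out the identification $Tk_!\cong T'$ and the creation argument in more detail than the paper, which treats both as nearly automatic.
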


\begin{proof}
For the proof, we show that left Kan extension $k_! \colon \cset'_\subdkls \to \cset_{\ell,1}$ is a left Quillen equivalence.
The triangulation functors are given by sending the object $[1]^n$ to $\Delta[1]^{n}$ in $\sset$, and then extending using that the category of presheaves is the free cocompletion \cite[Theorem 4.51]{Kelly}.
In particular, we have the following commutative diagram,
\[ \begin{tikzcd}
\square' \rar{k} \dar & \square \dar \rar & \sset \\
\cset' \rar{k_!} \ar[urr, bend right=40,"T'"' near end] & \cset \ar[ur,"T"] 
\end{tikzcd} \]
whose vertical morphisms are Yoneda embeddings.
If we knew that $k_!$ was a left Quillen functor, then we would have a diagram
\[ \begin{tikzcd}[column sep=tiny]
\cset'_\subdkls \ar[rr,"k_!"] \ar[dr,"T'"'] & & \cset_{\ell,1} \ar[dl,"T"] \\
& \sset_{(\infty,1)}
\end{tikzcd} \]
of left Quillen functors with $T$ and $T'$ left Quillen equivalences by \cref{cor cubical QE} and \cite[Theorem 6.1]{DohertyKapulkinLindseySattler:CMI1C}.
By two out of three for Quillen equivalences \cite[Corollary 1.3.15]{Hovey:MC}, this would imply that $k_!$ is a left Quillen equivalence as well.

It remains to check that $k_!$ is a left Quillen functor.
But this is automatic, since $T'$ preserves cofibrations and acyclic cofibrations, and $T$ reflects cofibrations and weak equivalences.
\end{proof}

\begin{remark}
It is also true that $k^* \colon \cset_{m,1} \to \cset'_\subdkls$ is a right Quillen equivalence. To establish this, it is enough to show that $k_!$ preserves monomorphisms.
This uses that $\square'$ is an EZ-Reedy category by \cite[Corollary 1.17]{DohertyKapulkinLindseySattler:CMI1C}, so that monomorphisms are generated by boundary inclusions of representables.
These generators are sent to monomorphisms by $k_!$, hence the same is true for all monomorphisms.
\end{remark}

\section{Further applications}

In this section we give model structures, induced from the Joyal and Kan--Quillen model structures on simplicial sets, on several other categories.
We will study three fully faithful strings of adjoint functors of the form $F\lcolon\cN\lrl\sset_{(\infty,\varepsilon)}\rcolon L,R$, and apply \cref{corollary fully faithful string} to obtain new model structures on $\cN$ for $(\infty,\varepsilon)$-categories.

\subsection{Model structures on prederivators}\label{subsec pred}

Let $\catfin$ denote the $2$-category of \emph{homotopically finite categories}, namely those categories whose nerve has only finitely many nondegenerate simplices, and let $\pder$ denote the category of \emph{small prederivators}, namely $2$-functors $\mathbb D\colon\catfin^{\op}\to\cat$, and strict natural transformations. As discussed in \cite[\textsection1]{FKKR}, this category is a locally presentable category of prederivators, as opposed to the more traditional (large and not locally presentable) category of $2$-functors $\mathbb D\colon\cat^{\op}\to\cC AT$.

There is an underlying functor $U\colon\pder\to\sset$, defined by $(U\mathbb D)_n \coloneqq \operatorname{ob} (\mathbb D([n]))$.
It is shown in \cite[\textsection1.15, 1.16]{FKKR} that the functor $U$ admits both a left and a right adjoint, and it is shown in \cite[Proposition~1.18]{FKKR} that the left adjoint is fully faithful.
In particular, there is a fully faithful string of adjoint functors $U\colon\pder\lrl\sset$ between the category of simplicial sets and the category of small prederivators.

From \cref{corollary fully faithful string}, we obtain the following, which endows $\pder$ with two Quillen equivalent model structures for $(\infty,\varepsilon)$-categories for any fixed $\varepsilon=0,1$.
   
\begin{prop}
Let $\varepsilon=0,1$.
The category $\pder$ admits both the left-induced model structure $\pder_{\ell,\varepsilon}$ and the right-induced model structure  $\pder_{r,\varepsilon}$ along $U\colon\pder\to\sset_{(\infty,\varepsilon)}$. Further, the left diagram below is a diagram of left Quillen equivalences, and the right diagram below is a diagram of right Quillen equivalences.
\[ \begin{tikzcd}[column sep=tiny]
\pder_{r,\varepsilon}\ar[rr,"\id_{\pder}"] \ar[dr,"U"'] & & \pder_{\ell,\varepsilon} \ar[dl, "U"] 
&
\pder_{r,\varepsilon} \ar[dr,"U"'] & & \pder_{\ell,\varepsilon} \ar[dl, "U"]  \ar[ll,"\id_{\pder}"']
\\
& \sset_{(\infty,\varepsilon)} & & & \sset_{(\infty,\varepsilon)}
\end{tikzcd} \]
\end{prop}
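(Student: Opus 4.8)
The plan is to deduce this proposition directly from \cref{corollary fully faithful string}, in exact parallel with how \cref{cor cubical QE} was obtained for cubical sets. To invoke that corollary I must exhibit a fully faithful string of adjoint functors $U \lcolon \pder \lrl \sset_{(\infty,\varepsilon)} \rcolon L, R$ in which $\sset_{(\infty,\varepsilon)}$ is combinatorial and $\pder$ is locally presentable; the corollary then supplies both induced model structures and the Quillen equivalence statements in a single step.

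First I would confirm the three standing hypotheses of the corollary. The category $\pder$ is locally presentable, as recalled above from \cite[\textsection1]{FKKR}. Both the Joyal and the Kan--Quillen model structures $\sset_{(\infty,\varepsilon)}$ are combinatorial, being cofibrantly generated with underlying locally presentable category $\sset$. The essential input is that $U$ sits inside a \emph{fully faithful} adjoint string: by \cite[\textsection1.15, 1.16]{FKKR} the functor $U$ admits both a left and a right adjoint, and by \cite[Proposition~1.18]{FKKR} the left adjoint is fully faithful. By \cref{characterizationfullyfaithful} (which uses \cite[Lemma~1.3]{DyckhoffTholen:EMPPPC}), once one of the outer adjoints is fully faithful so is the other, so the whole string $U \lcolon \pder \lrl \sset_{(\infty,\varepsilon)} \rcolon L, R$ is fully faithful.

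With these hypotheses verified, \cref{corollary fully faithful string} applies verbatim and yields at once the existence of both $\pder_{\ell,\varepsilon}$ and $\pder_{r,\varepsilon}$, together with the assertion that the left-hand diagram is a diagram of left Quillen equivalences and the right-hand diagram is a diagram of right Quillen equivalences. The only genuine obstacle is establishing full faithfulness of the adjoint string, but this has already been settled by the cited results of \cite{FKKR}; the remaining verifications (local presentability of $\pder$ and combinatoriality of $\sset_{(\infty,\varepsilon)}$) are routine and the conclusion then follows formally.
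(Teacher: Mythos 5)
Your proposal is correct and follows exactly the paper's own route: the paper likewise establishes the fully faithful adjoint string $U \lcolon \pder \lrl \sset \rcolon L,R$ from the cited results of \cite{FKKR} (both adjoints exist, and the left adjoint is fully faithful, hence by \cref{characterizationfullyfaithful} so is the right) and then invokes \cref{corollary fully faithful string} with $\pder$ locally presentable and $\sset_{(\infty,\varepsilon)}$ combinatorial. Nothing is missing.
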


The model structure $\pder_{r,1}$ is the one considered in \cite[\textsection3]{FKKR}, while the others are new.
   
  \subsection{Model structures on marked simplicial sets}\label{subsec marked}

   Let $\msset$ denote the category of marked simplicial sets, namely simplicial sets endowed with a specified set of \emph{marked} $1$-simplices, and maps that preserve the marking, as in \cite[\textsection3.1]{htt}.
   
   There is an underlying functor $U\colon\msset\to\sset$ which just forgets the marking.
   This functor admits both a left adjoint and a right adjoint which are given by the minimal and maximal marking respectively, $(-)^{\flat},(-)^{\sharp}\colon\sset\to\msset$. 
   The minimal and maximal marking functors are fully faithful, since the unit of the adjunction $(-)^{\flat}\dashv U$ is an identity. 
In particular, there is a fully faithful string of adjoint functors
\[U\lcolon \msset\lrl\sset\rcolon (-)^{\flat},(-)^{\sharp}\]
    between the category of simplicial sets and the category of marked simplicial sets.

From \cref{corollary fully faithful string}, we obtain the following, which endows $\msset$ with two Quillen equivalent model structures for $(\infty,\varepsilon)$-categories for any fixed $\varepsilon=0,1$.

\begin{prop}
Let $\varepsilon=0,1$.
The category $\msset$ admits both the left-induced model structure $\sset^{+}_{\ell,\varepsilon}$ and the right-induced model structure $\sset^{+}_{r,\varepsilon}$ along $U\colon\msset\to\sset_{(\infty,\varepsilon)}$. Further, the left diagram below is a diagram of left Quillen equivalences, and the right diagram below is a diagram of right Quillen equivalences.
\[ \begin{tikzcd}[column sep=tiny]
\sset^{+}_{r,\varepsilon}\ar[rr,"\id_{\msset}"] \ar[dr,"U"'] & & \sset^{+}_{\ell,\varepsilon} \ar[dl, "U"] 
&
\sset^{+}_{r,\varepsilon} \ar[dr,"U"'] & & \sset^{+}_{\ell,\varepsilon} \ar[dl, "U"]  \ar[ll,"\id_{\msset}"']
\\
& \sset_{(\infty,\varepsilon)} & & & \sset_{(\infty,\varepsilon)}
\end{tikzcd} \]
\end{prop}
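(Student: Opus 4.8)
The plan is to recognize this proposition as a direct instance of \cref{corollary fully faithful string}, exactly parallel to the treatment of cubical sets in \cref{cor cubical QE} and of prederivators in the previous subsection. The hypotheses of that corollary require a fully faithful string of adjoint functors $F\lcolon \cN \lrl \cM \rcolon L,R$ with $\cM$ a combinatorial model category and $\cN$ locally presentable; here I would take $F = U$, $\cM = \sset_{(\infty,\varepsilon)}$, $\cN = \msset$, $L = (-)^{\flat}$, and $R = (-)^{\sharp}$. Once the hypotheses are in place, the corollary delivers both induced model structures together with the two triangles of left (respectively right) Quillen equivalences, with no further argument needed.

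The substantive input has already been assembled in the discussion preceding the statement. The forgetful functor $U\colon \msset \to \sset$ admits the minimal and maximal marking functors $(-)^{\flat}$ and $(-)^{\sharp}$ as its left and right adjoints, producing the adjoint string $U\lcolon \msset\lrl\sset\rcolon (-)^{\flat},(-)^{\sharp}$. This string is fully faithful: the unit of $(-)^{\flat}\dashv U$ is an identity, so $(-)^{\flat}$ is fully faithful, whence $(-)^{\sharp}$ is as well by \cref{characterizationfullyfaithful}. It remains only to record the two mild structural hypotheses. The category $\msset$ is locally presentable, being a category of presheaves of the relevant shape. The model categories $\sset_{(\infty,0)}$ and $\sset_{(\infty,1)}$ are combinatorial, these being standard facts about the Kan--Quillen and Joyal model structures, and hence in particular accessible and cofibrantly generated as \cref{corollary fully faithful string} demands.

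With these verifications in hand, I would simply invoke \cref{corollary fully faithful string} to conclude. I do not anticipate a genuine obstacle: the only potentially nontrivial content, namely exhibiting the two adjoints to $U$ and checking that the marking functors are fully faithful, has been handled in the ambient text, so what remains is the bookkeeping of confirming combinatoriality and local presentability and then citing the corollary.
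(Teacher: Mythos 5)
Your proposal is correct and matches the paper's own argument: the paper likewise observes that $(-)^{\flat}$ and $(-)^{\sharp}$ are the two adjoints of $U$, that the string is fully faithful because the unit of $(-)^{\flat}\dashv U$ is an identity, and then invokes \cref{corollary fully faithful string}. Your extra bookkeeping (local presentability of $\msset$, combinatoriality of $\sset_{(\infty,\varepsilon)}$) is left implicit in the paper but is exactly the right thing to check.
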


These model structures on $\msset$ are all new, and seemingly different from the model structure on $\msset$ that models $\infty$-categories constructed by Lurie in \cite{htt}. 

\begin{rmk}\label{lurie model structure remark}
Recall from \cite[\S3.1.3]{htt} the \emph{Cartesian model structure} on $\msset$, which we denote by $\msset_\sublurie$ where the cofibrations are the monomorphisms and the fibrant objects are the naturally marked quasi-categories.
Lurie shows in \cite[Proposition 3.1.5.3]{htt} that the functor $U\colon\msset_\sublurie\to\sset_{(\infty,1)}$
is a right Quillen equivalence. 
Although the model structures $\msset_{\ell,1}$ and $\msset_{r,1}$ do not seem to be comparable with $\msset_\sublurie$ via the identity functor, we have the following composable chain of Quillen equivalences.
\[ \begin{tikzcd}
\msset_{r,1} \rar[shift left, "\id"] & 
\msset_{\ell,1} \lar[shift left, "\id"] \rar[shift left, "U"] &  
\sset_{(\infty,1)} \lar[shift left, "(-)^{\sharp}"] \rar[shift left, "(-)^{\flat}"] & 
\msset_\sublurie \lar[shift left, "U"]
\end{tikzcd} \]
\end{rmk}

\begin{remark}
There is an adjoint string $\mathcal St_{\leq n} \lrl \mathcal St_{(\infty,n)}$ given in \cite[\S2.5]{EmilyNotes}, where $\mathcal St_{(\infty,n)}$ is the category of stratified simplicial sets equipped with the model structure for $(\infty,n)$-categories \cite[4.25]{EmilyNotes}, and $\mathcal St_{\leq n}$ are objects marked only through dimension $n$.
This is an idempotent and homotopy idempotent adjoint string, though it is not fully faithful.
The left- and right-induced model structures on $\mathcal St_{\leq n}$ exist and coincide, and the inclusion is a right Quillen equivalence.
Moreover, when $n=0$ we have $\mathcal St_{\leq 0} = \sset_{(\infty,0)}$ and when $n=1$ we have $\mathcal St_{\leq 1} = \msset_\sublurie$.
\end{remark}
 
  \subsection{Model structures on bisimplicial sets} \label{subsec bisimp}
   
   Let $\ssset$ denote the category of bisimplicial sets, and $i_1^*\colon\ssset\to\sset$ the  zeroth row functor, defined by taking the first row of a bisimplicial set as in \cite[\textsection4]{JT}.
   The functor $i_1^*$ can be seen as the functor induced by
   the fully faithful inclusion $i_1\colon \Delta\hookrightarrow\Delta\times\Delta$, given by $[n]\mapsto[n]\times[0]$. In particular, $i_1^*$ admits a left and a right adjoint, $(i_1)_!,(i_1)_*\colon\sset\to\ssset$ obtained as left and right Kan extension along the fully faithful functor $i_1$, and they are therefore fully faithful. In particular, there is a fully faithful string of adjoint functors
\[i_1^*\lcolon \ssset\lrl\sset\rcolon (i_1)_!,(i_1)_*\]
    between the category of simplicial sets and the category of bisimplicial sets.

From \cref{corollary fully faithful string}, we obtain the following, which endows $\ssset$ with two Quillen equivalent model structures for $(\infty,\varepsilon)$-categories for any fixed $\varepsilon=0,1$.

\begin{prop}
Let $\varepsilon=0,1$.
The category $\ssset$ admits both the left-induced model structure $\ssset_{\ell,\varepsilon}$ and the right-induced model structure $\ssset_{r,\varepsilon}$ along $i_1^* \colon\ssset\to\sset_{(\infty,\varepsilon)}$. Further, the left diagram below is a diagram of left Quillen equivalences, and the right diagram below is a diagram of right Quillen equivalences.
\[ \begin{tikzcd}[column sep=tiny]
\ssset_{r,\varepsilon}\ar[rr,"\id_{\ssset}"] \ar[dr,"i_1^*"'] & & \ssset_{\ell,\varepsilon} \ar[dl, "i_1^*"] 
&
\ssset_{r,\varepsilon} \ar[dr,"i_1^*"'] & & \ssset_{\ell,\varepsilon} \ar[dl, "i_1^*"]  \ar[ll,"\id_{\ssset}"']
\\
& \sset_{(\infty,\varepsilon)} & & & \sset_{(\infty,\varepsilon)}
\end{tikzcd} \]
\end{prop}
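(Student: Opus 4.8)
The plan is to recognize this proposition as a direct instance of \cref{corollary fully faithful string}, exactly as in the prederivator and marked-simplicial-set cases immediately above. I would apply that corollary to the adjoint string $i_1^* \lcolon \ssset \lrl \sset_{(\infty,\varepsilon)} \rcolon (i_1)_!, (i_1)_*$, taking $\cN = \ssset$ and $\cM = \sset_{(\infty,\varepsilon)}$, so that all the homotopical content is absorbed into the corollary and only its three hypotheses remain to be confirmed.

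First, the adjoint string is \emph{fully faithful}: this was already recorded in the paragraph preceding the proposition, and it follows from \cref{adjointtriplefromchangeofshape} applied to the fully faithful functor $i_1\colon\Delta\hookrightarrow\Delta\times\Delta$ (equivalently, to $i_1^{\op}$ on opposite categories, matching the presheaf convention). Full faithfulness of $i_1$ itself is immediate, since $\Delta([0],[0])$ is a singleton and so the second coordinate imposes no constraint on morphisms. Second, $\sset_{(\infty,\varepsilon)}$ is combinatorial for both $\varepsilon = 0$ and $\varepsilon = 1$, as both the Kan--Quillen and the Joyal model structures are cofibrantly generated and accessible. Third, $\ssset = \set^{(\Delta\times\Delta)^{\op}}$ is a presheaf category and hence locally presentable. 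With these in place, \cref{corollary fully faithful string} yields both induced model structures $\ssset_{\ell,\varepsilon}$ and $\ssset_{r,\varepsilon}$ together with the two displayed diagrams of left and, respectively, right Quillen equivalences.

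Since each hypothesis is either immediate or already established, I anticipate no genuine obstacle here. The only step worth isolating is the full faithfulness of the adjoint string, which is the hinge on which the corollary turns; once it is in hand, the conclusion is purely formal.
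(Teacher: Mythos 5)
Your proposal is correct and follows exactly the paper's own route: the paper establishes the fully faithful adjoint string $i_1^*\lcolon \ssset\lrl\sset\rcolon (i_1)_!,(i_1)_*$ via Kan extension along the fully faithful inclusion $i_1$ (the content of \cref{adjointtriplefromchangeofshape}) and then invokes \cref{corollary fully faithful string}, with combinatoriality of $\sset_{(\infty,\varepsilon)}$ and local presentability of $\ssset$ implicit. Your verification of the hypotheses, including the direct check that $i_1$ is fully faithful, is accurate and adds nothing beyond what the paper's argument requires.
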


These model structures on $\ssset$ are all new, and seemingly different from the model structure on $\ssset$ that models $\infty$-categories constructed by Rezk in \cite{rezkhomotopy}.

\begin{rmk}\label{Rezk model structure remark}
Recall from \cite[Theorem 7.2]{rezkhomotopy} the \emph{complete Segal space model structure} on $\ssset$, which we denote by $\ssset_\subrezk$ where the cofibrations are the monomorphisms and the fibrant objects are the (injectively fibrant) complete Segal spaces.
Joyal--Tierney show in \cite[Theorem 4.11]{JT} that the functor $i_1^*\colon\ssset_\subrezk\to\sset_{(\infty,1)}$
is a right Quillen equivalence. 
Although the model structures $\ssset_{\ell,1}$ and $\ssset_{r,1}$ do not seem to be comparable with $\ssset_\subrezk$ via the identity functor, we have the following composable chain of Quillen equivalences.
\[ \begin{tikzcd}
\ssset_{r,1} \rar[shift left, "\id"] & 
\ssset_{\ell,1} \lar[shift left, "\id"] \rar[shift left, "i_1^*"] &  
\sset_{(\infty,1)} \lar[shift left, "(i_1)_*"] \rar[shift left, "(i_1)_!"] & 
\ssset_\subrezk \lar[shift left, "i_1^*"]
\end{tikzcd} \]
\end{rmk}

\bibliographystyle{amsalpha}
\bibliography{ref}
\end{document}